\documentclass[12pt]{amsart}
\usepackage{amsmath,amsthm,amscd,amssymb,amsfonts, amsbsy}
\usepackage{latexsym}
\usepackage{txfonts}
\usepackage{enumerate}
\usepackage{verbatim}

\numberwithin{equation}{section}

\theoremstyle{plain}
\newtheorem{theorem}[equation]{Theorem}
\newtheorem{lemma}[equation]{Lemma}

\theoremstyle{definition}

\newtheorem{example}[equation]{Example}
\newtheorem*{acknowledgment}{Acknowledgment}

\theoremstyle{remark}
\newtheorem{remark}[equation]{Remark}

\newcommand{\dv}{\operatorname{div}}

\newcommand{\mysection}[1]{\section{#1}
\setcounter{equation}{0}}

\newcommand{\bR}{\mathbb R}
\newcommand{\bP}{\mathbb P}

\newcommand{\bZ}{\mathbb Z}

\newcommand\cL{\mathcal{L}}
\newcommand\cP{\mathcal{P}}

\newcommand\tT{\tilde{T}}
\newcommand\tD{\tilde{D}}
\newcommand\tv{\tilde{v}}
\newcommand\tu{\tilde{u}}
\newcommand\tf{\tilde{f}}
\newcommand\tvf{\tilde{\vec{f}}}

\newcommand\hT{\hat{T}}

\newcommand{\ip}[1]{\left\langle#1\right\rangle}

\newcommand{\abs}[1]{\left\lvert#1\right\rvert}

\renewcommand{\epsilon}{\varepsilon}
\renewcommand{\vec}[1]{\boldsymbol{#1}}

\begin{document}
\title[Partial Schauder estimates]
{Partial Schauder estimates for second-order elliptic and parabolic equations}

\author[H. Dong]{Hongjie Dong}
\address[H. Dong]{Division of Applied Mathematics, Brown University,
182 George Street, Providence, RI 02912, United States of America}
\email{Hongjie\_Dong@brown.edu}

\author[S. Kim]{Seick Kim}
\address[S. Kim]{Department of Computational Science and Engineering, Yonsei University, 262 Seongsanno, Seodaemun-gu, Seoul 120-749, Republic of Korea}

\email{kimseick@yonsei.ac.kr}

\subjclass[2000]{35B45, 35J15, 35K10}

\keywords{partial Schauder estimates, second-order elliptic equations, second-order parabolic equations.}

\begin{abstract}
We establish Schauder estimates for both divergence and non-divergence form second-order elliptic and parabolic equations involving H\"older semi-norms not with respect to all, but only with respect to some of the independent variables.

\end{abstract}

\maketitle

\mysection{Introduction}

The aim of this article is to obtain certain pointwise estimates, which we shall hereafter call \textit{partial Schauder estimates}, for both divergence and non-divergence form second-order elliptic and parabolic equations involving H\"older semi-norms not with respect to all, but only with respect to some of the independent variables.

To be more precise, let us first introduce some related notations.
Most notations are chosen to be compatible with those in \cite{Kr96}.
Let $x=(x^1,\ldots, x^d)$ be a point in $\bR^d$, with $d\ge 2$, and $q$ be an integer such that $1\le q <d$.
We distinguish the first $q$ coordinates of $x$ from the rest and write $x=(x',x'')$, where $x'=(x^1,\ldots,x^q)$ and $x''=(x^{q+1},\ldots,x^d)$.
For a function $u$ on $\bR^d$, we define a \textit{partial H\"older semi-norm} with respect to $x'$ as
\[
[u]_{x',\delta}:=\sup_{x''\in\bR^{d-q}}\,\sup_{\substack{x',y'\in \bR^q\\ x'\neq y'}}\frac {|u(x',x'')-u(y',x'')|}{|x'-y'|^\delta}.
\]
Throughout this article, we assume $0<\delta<1$ unless explicitly otherwise stated.
For  $k=0,1,2,\ldots$, we set
\[
[u]_{x',k+\delta}=[D_{x'}^k u]_{x',\delta}=\max_{\alpha\in\bZ_+^q,\, |\alpha|=k}[\tilde{D}^\alpha u]_{x',\delta},
\]
where we used the usual multi-index notation and $\tD^\alpha:=D_1^{\alpha_1}\cdot\ldots D_q^{\alpha_q}$.

Let $L$ be a uniformly elliptic operators in non-divergence form $Lu=a^{ij}D_{ij} u$, whose coefficients are measurable in $x$ and H\"older continuous in $x'$. Then a partial Schauder estimate for $L$ in the whole space $\bR^d$ is an estimate of the form
\begin{equation*}
\tag{\textasteriskcentered}\label{PS}
[u]_{x',2+\delta}\leq N [L u]_{x',\delta}+NK[D^2 u]_0,
\end{equation*}
where $N$ is a constant that depends only on $d$, $q$, $\delta$ and the ellipticity constant  of $L$, and $K$ is the partial H\"older semi-norm of the coefficients of $L$ with respect to $x'$; see Theorem~\ref{thm1} and Remark \ref{rem7} below for more precise statement.
Moreover, if the coefficients of the elliptic operator $L$ are constants, then we have a better partial Schauder estimate of the form
\begin{equation*}
\tag{\dag}\label{C}
[D_{x'} u]_{1+\delta}\leq N [Lu]_{x',\delta},
\end{equation*}
which means that if $Lu$ is H\"older continuous in $x'$, then $D_{xx'}u$ are H\"older continuous not just in $x'$ but in {\em all} variables. For the proof of \eqref{C}, we make use of the divergence structure in operators with constant coefficients.
We also give an example which shows the optimality of \eqref{C}.
It should be mentioned here that the estimate \eqref{C} is originally due to Fife \cite{Fife}, who actually treated elliptic equations of order $2m$ by means of the potential theory. However, our method also works for parabolic equation with coefficients merely {\em measurable} in the time variable, to which the potential theory is not applicable.
In this case, we prove that
\begin{equation*}
\tag{\ddag}\label{C2}
[D_{x'} u]_{(1+\delta)/2,1+\delta}\le N[Pu]_{x',\delta},
\end{equation*}
which implies that if $Pu:=u_t-a^{ij}(t)D_{ij}u$ is H\"older continuous in $x'$, then $D_{xx'} u$ are H\"older continuous in $(t,x)$; see Sect.~\ref{sec:c} for the details of the estimates \eqref{C}, \eqref{C2}, and other related results.

There is a vast literature on the classical ``full'' Schauder estimates of elliptic and parabolic equations. We refer readers to, for example, \cite{Brandt, Caffarelli89, Campanato, Knerr, KrPr, Lieb92, Lorenzi, Peetre, Safonov84, Safonov88, Simon, Trudinger, XJWang} and references therein.
Roughly speaking, the classical Schauder theory for second-order elliptic equations in non-divergence form says that if all the coefficients and data are H\"older continuous in all variables, then the same holds for the second derivatives of the solution.
The Schauder theory for second-order parabolic equations in non-divergence form says that if all the coefficients and data are H\"older continuous in the spatial variables and measurable in the time variable, then the same holds for the spatial second derivatives of the solution (see, e.g., \cite{Brandt, Knerr, Lieb92, Lorenzi}).\footnote{In many places, Schauder theory for parabolic equations may also refer to the result which says that if the coefficients and data are H\"older continuous in both space and time variables, then the same holds for the spatial second derivatives and the time derivative of the solution (see, e.g., \cite{Kr96, Lieberman}).}
These results were recently generalized in \cite{KrPr} to equations with growing lower order coefficients.

On the other hand, it seems to us that there is very little literature regarding Schauder estimates for elliptic and parabolic equations with coefficients and data that are regular only with respect to some of the independent variables.
We started investigating this problem after conversations with Professor Xu-Jia Wang, who recently informed us about a paper by Fife \cite{Fife} and an upcoming article by himself and Tian \cite{TW} on this subject. Another motivation of our paper is recent interesting work initiated by Krylov in \cite{Krylov_2005} on $L_p$-solvability of elliptic and parabolic equations with leading coefficients VMO in some of the independent variables.

Compared to previously known results, the novelty of our results is that, as we alluded earlier, we allow the coefficients of the operator to be very irregular in $x''$; the payoff is that our method only works for second-order elliptic and parabolic operators, where the maximum principle and Krylov-Safonov theory (or De Giorgi-Moser-Nash theory) are available. We also note that in the nondivergence case, the operators are allowed to be degenerate in $x''$; see Remark \ref{rem15}.

The organization of this paper is as follows. In Sect.~\ref{sec:m}, we state our main theorems and introduce some other notations.  The proofs of main theorems are given in Sect.~\ref{sec:e} and Sect.~\ref{sec:p}. Finally, we treat equations with coefficients independent of $x$ in Sect.~\ref{sec:c} and prove estimates \eqref{C} and \eqref{C2}.

\mysection{Main Results}		\label{sec:m}

First, we consider elliptic operators in non-divergence form
\begin{equation}
\label{eq0.1}
Lu:=a^{ij}(x'')D_{ij}u
\end{equation}
and elliptic operators in divergence form
\begin{equation}
\label{eq0.2}
\cL u:=D_i(a^{ij}(x'')D_j u),
\end{equation}
where the coefficients $a^{ij}(x)=a^{ij}(x'')$ are bounded measurable functions on $\bR^d$ that are independent of $x'$ and satisfy the uniform ellipticity condition
\begin{equation}
\label{elliptic}
\nu|\xi|^2\le a^{ij}(x)\xi^i\xi^j\le \nu^{-1}|\xi|^2,\quad\forall x\in\bR^{d},\,\, \xi\in \bR^d,
\end{equation}
for some constant $\nu\in (0,1]$.
We assume the symmetry of the coefficients (i.e., $a^{ij}=a^{ji}$) for the operators $L$ in non-divergence form but for the operators $\cL$ in divergence form, we instead assume that $\sum_{i,j=1}^d |a^{ij}|^2\leq\nu^{-2}$.

For  $k=0,1,2,\ldots$, we denote $C^{k}_{x'}(\bR^d)$ the set of all bounded measurable functions $u$ on $\bR^d$ whose derivatives $\tilde{D}^\alpha u$ for $\alpha\in \bZ_+^q$ with $|\alpha|\le k$ are continuous and bounded in $\bR^d$.
We denote by $C^{k+\delta}_{x'}(\bR^d)$ the set of all functions $u\in C^{k}_{x'}(\bR^d)$ for which the partial H\"older semi-norm $[u]_{x', k+\delta}$ is finite.
We use the notation $W^k_p(\bR^d)$, $k=1,2,\ldots$, for the Sobolev spaces in $\bR^d$.

We say that $u$ is a strong solution of $Lu=f$ in $\bR^d$ if $u\in W^2_{d,\,loc}(\bR^d)$ and satisfies the equation $Lu=f$ a.e. in $\bR^d$.
\begin{theorem}
                                    \label{thm1}
Let $u$ be a bounded strong solution of the equation
\[L u=f \quad \text{in }\,\bR^d,\]
where $f \in C^\delta_{x'}(\bR^d)$ and the coefficients $a^{ij}$ of the operator $L$ are continuous in $\bR^d$. Then $u\in C^{2+\delta}_{x'}(\bR^d)$ and there is a constant $N=N(d,q,\delta,\nu)$ such that
\begin{equation}
                                            \label{eq3.58}
[u]_{x',2+\delta}\le N[f]_{x',\delta}.
\end{equation}
\end{theorem}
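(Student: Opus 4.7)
The crucial structural fact is that since the coefficients $a^{ij}$ depend only on $x''$, the operator $L$ commutes with every partial derivative $\tD^\beta$ in the $x'$-variables ($\beta\in\bZ_+^q$). In particular, whenever $Lv=g(x'')$ with right-hand side independent of $x'$, each derivative $\tD^\beta v$ with $|\beta|\ge 1$ satisfies the homogeneous equation $L(\tD^\beta v)=0$. Via the Krylov--Safonov interior H\"older estimate applied inductively to finite difference quotients of $v$ in the $x'$-direction (which is legitimate because $L$ commutes with $x'$-translations), this produces, for all $\beta\in\bZ_+^q$,
\[
\sup_{B_{r/2}(x_0)}|\tD^\beta v|\le N(|\beta|)\,r^{-|\beta|}\sup_{B_r(x_0)}|v|,
\]
so such functions $v$ are essentially polynomial in $x'$ to arbitrarily high order, and this is the engine of the proof.

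The plan is then a Campanato-type perturbation iteration. Fix $x_0=(x_0',x_0'')\in\bR^d$ and $r>0$, and split $f(x)=f(x_0',x'')+g(x)$ with $|g|\le[f]_{x',\delta}\,r^\delta$ on $B_r(x_0)$. Using the $W^2_p$ theory for uniformly elliptic operators with continuous coefficients, I would solve
\[
Lw=g\ \ \text{in }B_r(x_0),\qquad w=0\ \ \text{on }\partial B_r(x_0),
\]
and invoke the Alexandrov--Bakelman--Pucci estimate to get $\|w\|_{L^\infty(B_r(x_0))}\le N r^{2+\delta}[f]_{x',\delta}$. The remainder $v:=u-w$ then satisfies $Lv=f(x_0',x'')$, whose source is independent of $x'$, so the observation above applies and supplies derivative bounds on $\tD^\beta v$ in $B_{r/2}(x_0)$ for all $\beta\in\bZ_+^q$.

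Consequently $v$ admits a quadratic Taylor expansion in $x'-x_0'$ (with coefficients depending on $x''$) whose remainder on a smaller ball $B_{\theta r}(x_0)$, $\theta\in(0,1/2)$, is controlled via the third $x'$-derivative bound by $N\theta^3\sup_{B_r}|v|$. Combining this with the ABP bound on $w$ and subtracting off the best quadratic-in-$x'$ approximation of $u$ on $B_r(x_0)$, one gets a decay inequality of the form
\[
\Phi(x_0,\theta r)\le\theta^{\delta}\,\Phi(x_0,r)+N[f]_{x',\delta},
\]
where $\Phi(x_0,r):=r^{-2-\delta}\inf_P\sup_{B_r(x_0)}|u-P|$ and $P$ ranges over polynomials in $x'$ of degree $\le 2$ with $x''$-dependent coefficients. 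Iterating and summing geometrically yields $\Phi(x_0,r)\le N[f]_{x',\delta}$ uniformly in $(x_0,r)$, which is a Campanato-type characterization of $[u]_{x',2+\delta}\le N[f]_{x',\delta}$.

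The main obstacle is making the effective decay exponent in the Campanato step an arbitrary $\delta<1$, rather than the possibly much smaller Krylov--Safonov exponent $\alpha_0\in(0,1)$ that comes directly out of the homogeneous equation. This is overcome precisely by exploiting the \emph{unlimited} $x'$-regularity of $v$: since the Taylor remainder contributes $\theta^3\sup_{B_r}|v|$, and $\theta^3\le\theta^{2+\delta}$, the $v$-part of the decay comes with the correct factor regardless of $\delta$. A subsidiary technical point is to verify that the quadratic approximations $P$ at different scales and base points are mutually compatible, so that the scale-wise Campanato decay translates into the pointwise partial H\"older estimate on $\tD^\alpha u$ for $|\alpha|=2$ rather than only a family of scale-dependent polynomial approximants.
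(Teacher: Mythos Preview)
Your outline identifies the same engine as the paper---commutation of $L$ with $x'$-translations, the Krylov--Safonov bootstrap yielding unlimited $x'$-regularity for solutions of the homogeneous equation, and the Campanato characterization of $[u]_{x',2+\delta}$ via approximation by $p\in\tilde\bP_2$---but the decay step $\Phi(x_0,\theta r)\le\theta^{\delta}\Phi(x_0,r)+N[f]_{x',\delta}$ cannot be obtained from your decomposition. With $u=v+w$ on $B_r(x_0)$ and $|w|\le Nr^{2+\delta}[f]_{x',\delta}$, the third $x'$-derivative bound on $v$ gives a Taylor remainder on $B_{\theta r}$ of size $N\theta^3\sup_{B_r}|v|$, and $\sup_{B_r}|v|$ is essentially $\sup_{B_r}|u|$, not $r^{2+\delta}\Phi(x_0,r)$. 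To turn it into the latter you must first replace $u$ by $u-P^*$ for an optimal $P^*\in\tilde\bP_2$ and rerun the construction; but $P^*$ has merely measurable $x''$-dependent coefficients, so $LP^*=a^{ij}(x'')D_{ij}P^*$ is undefined (the cross terms $D_{ij}P^*$ with $j>q$ require $x''$-derivatives of those coefficients). Thus $L(u-P^*)$ is no longer a function, and the splitting into ``frozen part plus H\"older-small part'' breaks down. Restricting to constant-coefficient polynomials does not help: the corresponding Campanato quantity then also measures oscillation of $u$ in $x''$, which you have no way to control.

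The paper sidesteps this by replacing polynomial subtraction with partial mollification in $x'$: since $L$ commutes with $x'$-convolution one has $L\tilde u^{\kappa r}=\tilde f^{\kappa r}$ automatically, with no need to apply $L$ to anything in $\tilde\bP_2$. Under the a~priori hypothesis $[u]_{x',2+\delta}<\infty$ (removed afterwards by a second mollification), $|u-\tilde u^{\kappa r}|\le N(\kappa r)^{2+\delta}[u]_{x',2+\delta}$ already has the correct scale. One then decomposes $u-\tilde u^{\kappa r}$ (not $u$) on $B_{\kappa r}$ into a piece $w$ with $Lw=0$ and boundary values $u-\tilde u^{\kappa r}$---so the clean estimate $[D^\ell_{x'}w]_0\le N(\kappa r)^{-\ell}|w|_0$ applies---and a piece $v$ with zero boundary values and $Lv=f-\tilde f^{\kappa r}$, small by the maximum principle. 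This yields directly $r^{-2-\delta}\inf_{p\in\tilde\bP_2}|u-p|_{0;B_r}\le N\kappa^{\delta-1}[u]_{x',2+\delta}+N\kappa^{2+\delta}[f]_{x',\delta}$, after which one takes the supremum, invokes the Campanato equivalence, and absorbs by choosing $\kappa$ large. No scale-to-scale iteration is needed, and the equation is never passed through an element of $\tilde\bP_2$.
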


\begin{remark}
In Theorem~\ref{thm1}, instead of assuming $u$ is a strong solution, we may assume that $u$ is a viscosity solution of $Lu=f$.
\end{remark}

\begin{remark}
The continuity assumption on the coefficients $a^{ij}$ is not essential in Theorem~\ref{thm1}, and the constant $N$ doesn't depend on the modulus of continuity of $a^{ij}$.
All that is needed for the proof is $W^2_d$-solvability of the Dirichlet problem \eqref{eq1.08}.
For example, we may assume that the coefficients $a^{ij}$ of $L$ belong to the class of VMO; see, e.g., \cite{CFL2}.
\end{remark}

We shall say that $u$ is a weak solution of $\cL u =\dv \vec f$ in $\bR^d$ if $u$ is a weak solution in $W^1_2(\Omega)$ of $\cL u =\dv \vec f$ for any bounded domain $\Omega\subset \bR^d$.

\begin{theorem}
                                    \label{thm3}
Let $u$ be a bounded weak solution of the equation
\[\cL u=\dv \vec f \quad \text{in }\,\bR^d,\]
where $\vec f=(f^1,\ldots,f^d)$ and $f^i\in C^{\delta}_{x'}(\bR^d)$ for $i=1,\ldots,d$.
Then $u\in C^{1+\delta}_{x'}(\bR^d)$ and there is a constant $N=N(d,q,\delta,\nu)$ such that
\[
[u]_{x',1+\delta}\le N[\vec{f}]_{x',\delta}.
\]
\end{theorem}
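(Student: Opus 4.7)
The plan is to mimic the proof strategy of Theorem~\ref{thm1}, with De Giorgi--Nash--Moser theory and $W^1_2$ energy estimates taking the place of the Aleksandrov--Krylov $W^2_p$ tools. The key structural input is that, since $a^{ij}=a^{ij}(x'')$, the operator $\cL$ commutes with all translations in $x'$; equivalently, every $x'$-difference quotient of a weak solution of $\cL w=0$ is again a weak solution of $\cL w=0$. By the scaling $u(x)\mapsto u(Rx)$, $\vec f(x)\mapsto \vec f(Rx)$, the target inequality is scale invariant, so it suffices to prove a local estimate on $B_{1/2}$ bounded by $[\vec f]_{x',\delta}+\norm{u}_{L^\infty(B_1)}+\norm{\vec f}_{L^\infty(B_1)}$ and then rescale, letting the ball radius tend to infinity to kill the $L^\infty$ terms (using that $u$ and $\vec f$ are globally bounded).

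The main step toward the local estimate is a Liouville-type lemma for the homogeneous equation: any weak solution $w$ of $\cL w=0$ in $B_R$ is arbitrarily smooth in the $x'$ variables on $B_{R/2}$, with the quantitative bound
\[
\norm{\tD^\alpha w}_{L^\infty(B_{R/2})}\le N(|\alpha|,d,q,\nu)\,R^{-|\alpha|-d/2}\norm{w}_{L^2(B_R)}\qquad(\alpha\in \bZ_+^q).
\]
This is proved by iterating Caccioppoli inequalities applied to $x'$-difference quotients of $w$ and then invoking Moser's local $L^\infty$ bound, with all constants depending only on $d,q,\nu$. In particular, $\tD w$ is Lipschitz in $x'$ on $B_{R/2}$.

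With this lemma in hand, the Schauder estimate follows by a standard perturbation / Campanato iteration. Fix $x_0\in B_{1/2}$ and a radius $r$; freeze the source at $x_0'$ by setting $\vec f^{\,0}(x''):=\vec f(x_0',x'')$, and decompose $u=u_1+u_2$ on $B_r(x_0)$, with $\cL u_1=\dv\vec f^{\,0}$ and $u_1=u$ on $\partial B_r(x_0)$. Since $\vec f^{\,0}$ is $x'$-independent, each $D_{x'_k}u_1$ solves $\cL(D_{x'_k}u_1)=0$, so by the lemma $\tD u_1$ is Lipschitz in $x'$ on $B_{r/2}(x_0)$ and can be approximated there by a constant (in $x'$) with error controlled by $(\rho/r)$ times an $L^2$-average of $\tD u_1$ on $B_r(x_0)$. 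The remainder $u_2$ has zero boundary data and source $\dv(\vec f-\vec f^{\,0})$ with pointwise bound $|\vec f-\vec f^{\,0}|\le r^\delta[\vec f]_{x',\delta}$, so the energy inequality gives $\norm{Du_2}_{L^2(B_r(x_0))}\le Nr^{d/2+\delta}[\vec f]_{x',\delta}$. Combining yields a Campanato-type decay for $\tD u$ of the form $\phi(\rho)\le N(\rho/r)^{d+2}\phi(r)+Nr^{d+2\delta}[\vec f]_{x',\delta}^2$, with $\phi(\rho)=\inf_{\vec c}\int_{B_\rho(x_0)}|\tD u-\vec c|^2$, and the standard Campanato iteration plus the partial Campanato characterization of $C^\delta_{x'}$ gives $\tD u\in C^\delta_{x'}$ on $B_{1/2}$ with the claimed bound.

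The main obstacle is proving the smoothness lemma with constants depending only on $d,q,\delta,\nu$ and not on any modulus of continuity of $a^{ij}(x'')$; this forces working with iterated difference quotients (rather than derivatives) of $w$ and propagating the constants carefully through each combined application of Caccioppoli and Moser. A secondary subtlety is the exact form of the partial Campanato characterization used to promote the decay estimate to a $C^\delta_{x'}$ bound on $\tD u$, which is most naturally formulated with respect to cylinders $B'_\rho(x_0')\times B''_\rho(x_0'')$ rather than Euclidean balls.
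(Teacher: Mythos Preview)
Your key lemma---interior $x'$-smoothness of solutions of $\cL w=0$ via iterated difference quotients and De Giorgi--Nash--Moser---is exactly the ingredient the paper uses (their estimate \eqref{eq16.41} in the divergence setting). The gap is in the Campanato step. If you run the iteration with \emph{true} constants $c$, the homogeneous piece $D_{x'_k}u_1$ solves $\cL(\cdot)=0$ but is only $C^{\delta_0}$ in the full variable $x$ (De Giorgi--Nash exponent); the extra $x'$-smoothness does not improve its oscillation in $x''$, so the decay you actually get is $(\rho/r)^{d+2\delta_0}$, not $(\rho/r)^{d+2}$, and the iteration only yields $C^{\min(\delta,\delta_0)}$ in $x'$. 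If instead you allow $c=c(x'')$ (the ``partial Campanato'' you allude to), then $D_{x'_k}u_1-c(x'')$ no longer solves $\cL(\cdot)=0$, so neither Caccioppoli nor your lemma lets you control $\|\tilde D^2 u_1\|_{L^\infty}$ by $\inf_{c(x'')}\|D_{x'_k}u_1-c(x'')\|_{L^2}$; the iteration does not close. Your ``secondary subtlety'' is thus the whole difficulty.

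The paper sidesteps this by working in $L^\infty$ with Safonov's method and Trudinger's partial mollification rather than freezing the source. One writes $u=\tilde u^{\kappa r}+w+v$ on $B_{\kappa r}$, where $w$ solves $\cL w=0$ with boundary data $u-\tilde u^{\kappa r}$ and $v$ has zero boundary data and source $\dv(\vec f-\tilde{\vec f}^{\kappa r})$. The maximum principle plus the mollification lemma gives $|w|_{0;B_{\kappa r}}\le N(\kappa r)^{1+\delta}[u]_{x',1+\delta}$ \emph{already}, so your lemma (with $\ell=2$) yields $|w-\tilde T^1_{x_0'}w|_{0;B_r}\le N\kappa^{\delta-1}r^{1+\delta}[u]_{x',1+\delta}$. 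A local boundedness estimate plus energy bounds handle $v$. One then approximates $u$ (not $\tilde D u$) by $p\in\tilde\bP_1$---polynomials affine in $x'$ with \emph{arbitrary} $x''$-dependence---and invokes the equivalent-norm characterization \cite[Theorem~3.3.1]{Kr96} to get an a~priori inequality $[u]_{x',1+\delta}\le N\kappa^{\delta-1}[u]_{x',1+\delta}+N\kappa^{1+\delta}[\vec f]_{x',\delta}$, absorbed by taking $\kappa$ large. The built-in $x''$-freedom in $\tilde\bP_1$ is what makes the partial estimate go through for every $\delta\in(0,1)$.
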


Next, we consider parabolic operators in non-divergence form
\begin{equation}
\label{eq0.3}
Pu:=u_t-a^{ij}(t,x'') D_{ij}u
\end{equation}
and parabolic operators in divergence form
\begin{equation}
\label{eq0.4}
\cP u:=u_t- D_i(a^{ij}(t,x'') D_j u),
\end{equation}
where $t\in \bR$ and $x=(x',x'')\in \bR^d$.
Here, we assume the coefficients $a^{ij}(t,x)=a^{ij}(t,x'')$ are bounded measurable functions on $\bR^{d+1}$ that are independent of $x'$ and satisfy the uniform parabolicity condition
\begin{equation}
\label{parabolic}
\nu|\xi|^2\le a^{ij}(t,x)\xi^i\xi^j\le \nu^{-1}|\xi|^2,\quad\forall (t,x)\in\bR^{d+1},\,\, \xi\in \bR^d,
\end{equation}
for some constant $\nu\in (0,1]$.
As in the elliptic case we assume the symmetry of the coefficients for the non-divergence form operators $P$ but for the operators $\cP$ in divergence form, we instead assume that $\sum_{i,j=1}^d |a^{ij}|^2\leq\nu^{-2}$.

For a function $u(t,x)=u(t,x',x'')$ on $\bR^{d+1}$, we define a partial H\"older semi-norm with respect to $x'$ as
\begin{equation}				\label{eq:fhs}
[u]_{x',\delta}:=\!\!\sup_{t\in \bR,\,x''\in\bR^{d-q}}\,\sup_{\substack{x', y'\in \bR^q\\ x'\neq y'}}\frac {|u(t,x',x'')-u(t,y',x'')|}{|x'-y'|^\delta}.
\end{equation}
Other related definitions such as $[u]_{x',k+\delta}$ ($k=0,1,2,\ldots$) are accordingly extended to functions $u=u(t,x)$ on $\bR^{d+1}$.
Let $Q$  be a domain in $\bR^{d+1}$. We say that $u\in W^{1,2}_p(Q)$ for some $p\ge 1$ if $u$ and its weak derivatives $D u$, $D^2 u$, and $u_t$ are in $L_p(Q)$.

We say that $u$ is a strong solution of $Pu=f$ in $\bR^{d+1}$ if $u \in W^{1,2}_{d+1,\,loc}(\bR^{d+1})$ and satisfies the equation $Pu=f$ a.e. in $\bR^{d+1}$.

\begin{theorem}
                                    \label{thm2}
Let $u$ be a bounded strong solution of the equation
\[P u=f \quad \text{in}\,\,\bR^{d+1},\]
where $f \in C^{\delta}_{x'}(\bR^{d+1})$ and the coefficients $a^{ij}$ of the operator $P$ are continuous in $\bR^{d+1}$. Then $u\in C_{x'}^{2+\delta}(\bR^{d+1})$ and there is a constant $N=N(d,q,\delta,\nu)$ such that
\[
[u]_{x',2+\delta}\le N[f]_{x',\delta}.
\]
\end{theorem}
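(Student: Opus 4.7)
I would follow the pattern of the elliptic analogue (Theorem~\ref{thm1}), replacing Euclidean balls with parabolic cylinders and using the anisotropic parabolic scaling throughout. The two main ingredients are: (i) a Campanato-type characterization of the partial H\"older space $C^{2+\delta}_{x'}(\bR^{d+1})$ in terms of mean-oscillation decay against polynomials of degree $\le 2$ in $x'$, and (ii) the estimate \eqref{C2} for operators with coefficients depending only on $t$, which will serve as the frozen-coefficient model. As a preliminary reduction, I would use approximation (mollifying $a^{ij}$ in $(t,x'')$) to assume that $u$ and $a^{ij}$ are smooth and all partial H\"older norms are a priori finite, then prove the estimate with a constant independent of the smoothness parameter.

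Fix a point $z_0 = (t_0, x_0) \in \bR^{d+1}$ and a scale $r>0$, and let $Q_r(z_0)$ denote the standard parabolic cylinder. Set $\bar a^{ij}(t) := a^{ij}(t, x_0'')$ and $\bar P := \partial_t - \bar a^{ij}(t) D_{ij}$. In $Q_r(z_0)$ the equation $Pu = f$ is rewritten as
\[
\bar P u = f + \bigl(\bar a^{ij}(t) - a^{ij}(t,x'')\bigr) D_{ij} u.
\]
I would decompose $u = v + w$ in $Q_r(z_0)$, where $v$ solves $\bar P v = 0$ with $v = u$ on the parabolic boundary and $w = u - v$ vanishes there. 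For $v$, estimate \eqref{C2} (available because $\bar P$ has coefficients independent of $x$) gives approximation of $v$ in $Q_{r/2}(z_0)$ by a polynomial of degree $\le 2$ in $x'$ with error of order $r^{2+\delta}$. For $w$, parabolic $W^{1,2}_p$ estimates bound $w$ and $D^2 w$ in terms of the $L^p$-norm of $\bar P w$, which is small because $|\bar a^{ij} - a^{ij}|$ is small on $Q_r(z_0)$ by the continuity of $a^{ij}$.

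Combining the two pieces and iterating over dyadic scales $r_k = 2^{-k} r$ in Campanato style, I expect to obtain a decay estimate of the form
\[
\inf_{P}\,\frac{1}{|Q_\rho(z_0)|}\int_{Q_\rho(z_0)} |u(z) - P(x' - x_0')|^2\, dz \le N\rho^{2(2+\delta)} [f]^{2}_{x',\delta}
\]
uniformly in $\rho > 0$ and $z_0$, where the infimum ranges over polynomials of degree $\le 2$ in the $x'$ variable alone. A partial-H\"older version of Campanato's theorem should then yield $[\tD^\alpha u]_{x',\delta} \le N [f]_{x',\delta}$ for every $\alpha \in \bZ_+^q$ with $|\alpha|=2$, proving the theorem.

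\textbf{Main obstacle.} The principal technical difficulty, as I see it, is handling the perturbation $(\bar a^{ij} - a^{ij}) D_{ij} u$, which involves full second derivatives of $u$ (including pure second derivatives in $x''$ and the mixed derivatives $D_{x'} D_{x''} u$), whereas the sought-after estimate controls only the partial H\"older norm of $\tD^2 u$ in $x'$. These full-derivative terms will have to be absorbed through a careful dyadic bootstrap, which relies crucially on $a^{ij}$ being independent of $x'$ so that the partial H\"older semi-norm of the coefficients in $x'$ vanishes. A secondary subtlety is verifying the Campanato-type characterization for the one-sided partial H\"older space, where the approximating polynomials involve only $x'$ while the means are taken over parabolic cylinders extending in $t$ and $x''$ as well.
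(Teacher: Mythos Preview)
Your freezing-in-$x''$ step is the wrong move and creates a gap you cannot close. The perturbation $(\bar a^{ij}(t)-a^{ij}(t,x''))D_{ij}u$ contains the full Hessian of $u$, in particular $D_{x''x''}^2 u$ and $D_{x'x''}^2 u$, none of which is controlled by the target quantity $[u]_{x',2+\delta}$. Continuity of $a^{ij}$ in $x''$ makes the coefficient difference small on $Q_r$, but it multiplies $D^2u$, and no dyadic or Campanato iteration will absorb a term of the form $\omega(r)\,\|D^2u\|_{L_p(Q_r)}$ into a functional that only measures oscillation in $x'$. The best such an argument can yield is an estimate with an extra additive term $N[D^2u]_0$ on the right---precisely the situation of Remark~\ref{rem7}, not the clean inequality of Theorem~\ref{thm2}. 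Your own ``main obstacle'' paragraph identifies this correctly but offers no mechanism to overcome it; invoking that $a^{ij}$ is independent of $x'$ does not help once you have already frozen in $x''$.

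The paper avoids the problem by never freezing in $x''$ at all. Because $a^{ij}=a^{ij}(t,x'')$ is independent of $x'$, the partial mollification $\tilde u^{\kappa r}$ in the $x'$ variables commutes with $P$, so $P\tilde u^{\kappa r}=\tilde f^{\kappa r}$. One then splits $u-\tilde u^{\kappa r}=w+v$ on $Q_{\kappa r}$, where $Pw=0$ with $w=u-\tilde u^{\kappa r}$ on $\partial_p Q_{\kappa r}$, and $Pv=f-\tilde f^{\kappa r}$ with $v=0$ on $\partial_p Q_{\kappa r}$. For $w$ one uses Krylov--Safonov plus finite-difference quotients in $x'$ (again exploiting independence of $a^{ij}$ from $x'$) to get interior bounds on $D_{x'}^3 w$; for $v$ the maximum principle and $|f-\tilde f^{\kappa r}|\le N(\kappa r)^\delta[f]_{x',\delta}$ give $|v|\le N(\kappa r)^{2+\delta}[f]_{x',\delta}$. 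No term involving $D_{x''}^2u$ ever appears. Note also that \eqref{C2} is a global result for solutions on all of $\bR^{d+1}$ and is proved later in Section~\ref{sec:c}; it is neither needed nor directly applicable as an interior estimate for your $v$ on $Q_r$.
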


We say that $u$ is a weak solution of $\cP u = \dv \vec f$ in $\bR^{d+1}$ if $u$ is a generalized solution from $V_2(Q)$ of $\cP u=\dv \vec f$ for any bounded cylinder $Q=(t_0,t_1)\times\Omega$ in $\bR^{d+1}$; see \cite[\S III.1]{LSU} for the definition of $V_2(Q)$, etc.

\begin{theorem}
                                    \label{thm4}
Let $u$ be a bounded weak solution of the equation
\[\cP u=\dv \vec f \quad \text{in}\,\,\bR^{d+1},\]
where $\vec f=(f^1,\ldots,f^d)$ and $f^i\in C^{\delta}_{x'}(\bR^{d+1})$ for $i=1,\ldots,d$.
Then  $u\in C^{1+\delta}_{x'}(\bR^{d+1})$ and there is a constant $N=N(d,q,\delta,\nu)$ such that
\[
[u]_{x',1+\delta}\le N[\vec{f}]_{x',\delta}.
\]
\end{theorem}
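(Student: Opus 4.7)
The plan is to run a Campanato-type argument parallel to the elliptic divergence proof (Theorem~\ref{thm3}), exploiting the fact that the coefficients $a^{ij}(t,x'')$ do not depend on $x'$. This independence means that $\cP$ commutes with differentiation and translation in $x'$, so if $v$ weakly solves $\cP v=0$ in a parabolic cylinder $Q_r(z_0)$, then so does $\tD^\alpha v$ for every $\alpha\in\bZ_+^q$. Combining Nirenberg's difference quotient technique with the Caccioppoli inequality and the local $L^\infty$-bound of De Giorgi--Nash--Moser for divergence-form parabolic equations, one obtains the a priori estimate
\[
\sup_{Q_{r/2}(z_0)}|\tD^\alpha v|^2 \le C_{|\alpha|}\,r^{-d-2-2|\alpha|}\int_{Q_r(z_0)}|v|^2\,dt\,dx, \qquad \alpha\in\bZ_+^q.
\]

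Fix $z_0=(t_0,x_0)$ and $r>0$, and let $\bar{\vec f}(t,x'')$ denote the average of $\vec f(t,\cdot,x'')$ over the $q$-ball $B'_r(x'_0)$. Decompose $u=v+w$ on $Q_r(z_0)$, where $v$ solves $\cP v=\dv\bar{\vec f}$ with the same parabolic boundary data as $u$, and $w=u-v$ solves $\cP w=\dv(\vec f-\bar{\vec f})$ with zero parabolic boundary data. The standard energy estimate yields
\[
\int_{Q_r(z_0)}|Dw|^2\,dz \le C\int_{Q_r(z_0)}|\vec f-\bar{\vec f}|^2\,dz \le C\,r^{d+2+2\delta}[\vec f]_{x',\delta}^2.
\]
Because $\bar{\vec f}$ is independent of $x'$, $D_{x'}(\dv\bar{\vec f})=0$, hence $D_{x'}v$ is a weak solution of the homogeneous equation. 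Applying the display above with $|\alpha|=1$ to $D_{x'}v$ and invoking the partial mean value theorem in $x'$ then gives, for $\rho\le r/2$,
\[
\int_{Q_\rho(z_0)}\bigl|D_{x'}v-[D_{x'}v]^{\sharp}_\rho\bigr|^2\,dz \le C(\rho/r)^{d+4}\int_{Q_r(z_0)}|D_{x'}v|^2\,dz,
\]
where $[\,\cdot\,]^{\sharp}_\rho(t,x'')$ denotes the mean over $B'_\rho(x'_0)$ at each fixed $(t,x'')$.

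Combining the two displays and writing $\Phi(z_0,\rho):=\int_{Q_\rho(z_0)}|D_{x'}u-[D_{x'}u]^{\sharp}_\rho|^2\,dz$, one obtains the Campanato-type decay
\[
\Phi(z_0,\rho) \le C(\rho/r)^{d+4}\,\Phi(z_0,r) + C\,r^{d+2+2\delta}[\vec f]_{x',\delta}^2,
\]
after normalizing via the replacement $u\mapsto u-\ell(x')$ by a suitably chosen linear function $\ell$; note that $\cP\ell$ is $\dv$ of a field independent of $x'$, so $[\vec f]_{x',\delta}$ is unchanged. Since $d+2+2\delta<d+4$, a standard iteration lemma together with $r\to\infty$ yields $\Phi(z_0,\rho)\le C\rho^{d+2+2\delta}[\vec f]_{x',\delta}^2$ uniformly in $z_0$. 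A partial Campanato characterization, applied slicewise in $(t,x'')$ via Fubini, then upgrades this bound into $[D_{x'}u]_{x',\delta}\le N[\vec f]_{x',\delta}$, i.e.\ $[u]_{x',1+\delta}\le N[\vec f]_{x',\delta}$.

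The principal obstacle is the passage $r\to\infty$ in the iteration step: since $u$ is only assumed bounded, $D_{x'}u$ need not be in $L^2(\bR^{d+1})$, and a priori we do not yet know the very conclusion $[D_{x'}u]_{x',\delta}<\infty$. The customary remedy is to mollify $u$ first in the $x'$-variables only---which leaves the coefficients, measurable in $(t,x'')$, unaffected---prove the estimate for the smoothed solution (for which all seminorms are a priori finite), and then send the mollification parameter to $0$ by lower semicontinuity. A second subtle point is that means must be formed by averaging only over $x'$-balls: this pure-$x'$ Campanato quantity correctly controls $[\,\cdot\,]_{x',\delta}$ and, crucially, does not overclaim regularity in the $(t,x'')$ directions, where the coefficients are only measurable.
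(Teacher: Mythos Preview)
Your Campanato approach is genuinely different from the paper's, which follows the Safonov--Krylov $L^\infty$ scheme: mollify $u$ in $x'$, solve the homogeneous problem in $Q_{\kappa r}$ with boundary data $u-\tilde u^{\kappa r}$, use the \emph{maximum principle} together with De Giorgi--Nash and difference quotients to control $w$ pointwise, use the local \emph{boundedness} estimate (not just the energy estimate) to control $v=u-\tilde u^{\kappa r}-w$ pointwise, and then invoke the polynomial--approximation characterization of H\"older seminorms. Everything in the paper is $L^\infty$-based, which is why the conclusion --- a supremum over $(t,x'')$ --- comes out directly.

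Your argument has a real gap at the last step. Your Campanato functional $\Phi(z_0,\rho)=\int_{Q_\rho(z_0)}|D_{x'}u-[D_{x'}u]^\sharp_\rho|^2\,dz$ integrates over the full parabolic cylinder while subtracting only the $x'$-mean. The decay $\Phi(z_0,\rho)\le C\rho^{d+2+2\delta}$ therefore says that the $(t,x'')$-\emph{average} of the slice quantity $\psi(t,x'';x_0',\rho):=\int_{B'_\rho(x_0')}|D_{x'}u-\fint|^2\,dx'$ is at most $C\rho^{q+2\delta}$. But $[u]_{x',1+\delta}$ is a \emph{supremum} over $(t,x'')$, and an averaged bound does not yield a pointwise one: for fixed $\rho$ and $x_0'$, nothing prevents $\psi(\cdot,\cdot;x_0',\rho)$ from having tall narrow spikes in $(t,x'')$. ``Slicewise Fubini'' does not bridge this; continuity of $D_{x'}\tilde u^\epsilon$ in $(t,x'')$ does not help either, since you only control averages over $(t,x'')$-regions of fixed radius $\rho$, not arbitrarily small ones. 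Nor can you switch to full means $(D_{x'}u)_{Q_\rho}$: that would require decay of the full oscillation of $D_{x'}v$, hence control of $D_{x''}D_{x'}v$ and $D_tD_{x'}v$, which divergence-form regularity with merely measurable $a^{ij}(t,x'')$ does not provide --- and indeed $D_{x'}u$ is not expected to be H\"older in $(t,x'')$ here.

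The missing idea is precisely what the paper supplies: pass from $L^2$ to $L^\infty$ on the inhomogeneous piece via the De Giorgi--Nash--Moser local boundedness estimate (and the maximum principle on the homogeneous piece), so that all bounds are pointwise in $(t,x'')$ from the start. If you want to keep a Campanato flavor, you would need to run the iteration on the quantity $\sup_{(t,x'')}\int_{B'_\rho(x_0')}|D_{x'}u-\fint|^2\,dx'$, but establishing the required bound on the error term then forces exactly the $L^\infty$ estimate above.
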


One may also wish to consider parabolic partial Schauder estimates regarding H\"older continuity in $t$ as well.
Let $z=(t,x)=(t,x^1,\ldots,x^d)$ be a point in $\bR^{d+1}$ and denote $z'=(t,x')=(t,x^1,\ldots,x^q)$.
We define the parabolic distance between the points $z_1'=(t_1,x_1')$ and $z_2'=(t_2,x_2')$ as
\[
\tilde{\rho}(z_1',z_2')=|x_1'-x_2'|+|t_1-t_2|^{1/2}.
\]
We define a partial H\"older semi-norm with respect to $z'$ as
\[
[u]_{z',\delta/2,\delta}:=\sup_{x''\in\bR^{d-q}}\,\sup_{\substack{z_i\in\bR^{q+1}\\z_1'\neq z_2'}}\frac {|u(z_1',x'')-u(z_2',x'')|}{\tilde{\rho}^\delta(z_1',z_2')}.
\]
By $C^{\delta/2,\delta}_{z'}(\bR^{d+1})$ we denote the set of all bounded measurable functions $u$ on $\bR^{d+1}$ for which $[u]_{z', \delta/2,\delta}<\infty$.
We also introduce $C^{1+\delta/2,2+\delta}_{z'}(\bR^{d+1})$ as the set of all bounded measurable functions $u$ for which  the derivatives $u_t$ and $\tD^\alpha u$ for $\alpha\in\bZ^q_+$ with $|\alpha|\le 2$ are continuous and bounded in $\bR^{d+1}$, and
\[
[u]_{z',1+\delta/2,2+\delta}:=[u_t]_{z',\delta/2,\delta}+[D^2_{x'}u]_{z',\delta/2,\delta}<\infty,
\]
where we used the notation
\[
[D_{x'}^k u]_{z',\delta/2,\delta}:=\max_{\alpha\in\bZ_+^q,\, |\alpha|=k}[\tilde{D}^\alpha u]_{z',\delta/2,\delta}, \quad k=0,1,2,\ldots.
\]
It is slightly more complicated to define $[u]_{z',(1+\delta)/2,1+\delta}$.
First, we define a semi-norm (see \cite[Chapter IV]{Lieberman})
\[
\ip{u}_{1+\delta}:=\sup_{x\in \bR^d}\, \sup_{\substack{t,s\in\bR\\ t\neq s}}\frac {|u(t,x)-u(s,x)|}{|t-s|^{(1+\delta)/2}}.
\]
Then we define
\[
[u]_{z',(1+\delta)/2,1+\delta}:=
[D_{x'} u]_{z',\delta/2,\delta}+\ip{u}_{1+\delta}.
\]
By $C^{(1+\delta)/2,1+\delta}_{z'}(\bR^{d+1})$ we denote the set of all bounded measurable functions $u$ for which the derivatives $\tD^\alpha u$ for $\alpha\in\bZ^q_+$ with $|\alpha|\le 1$ are continuous and bounded in $\bR^{d+1}$ and $[u]_{z',(1+\delta)/2,1+\delta}<\infty$.

If the coefficients $a^{ij}(t,x'')$ appearing in \eqref{eq0.3} and \eqref{eq0.4} are also independent of $t$ so that $a^{ij}=a^{ij}(x'')$, then we have the following theorems.

\begin{theorem}
                                    \label{thm5}
Let $u$ be a bounded strong solution of the equation
\[Pu=f \quad \text{in}\,\,\bR^{d+1},\]
where $f \in C^{\delta/2,\delta}_{z'}(\bR^{d+1})$ and the coefficients $a^{ij}$ of the operator $P$ are continuous in $\bR^{d+1}$ and independent of $z'$. Then $u\in C_{z'}^{1+\delta/2,2+\delta}(\bR^{d+1})$ and there is a constant $N=N(d,q,\delta,\nu)$ such that
\[
[u]_{z',1+\delta/2,2+\delta}\le N[f]_{z',\delta/2,\delta}.
\]
\end{theorem}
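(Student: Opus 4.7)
The plan is to combine Theorem~\ref{thm2} with a Campanato-type iteration carried out in the parabolic distance $\tilde\rho$ on the $(t,x')$-variables, exploiting the translation invariance of $P$ in both $t$ and $x'$ that comes from $a^{ij}=a^{ij}(x'')$.

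Theorem~\ref{thm2} already furnishes the bound $[D^2_{x'}u]_{x',\delta}\le N[f]_{x',\delta}\le N[f]_{z',\delta/2,\delta}$, which captures the $x'$-directional part of the target semi-norm $[u]_{z',1+\delta/2,2+\delta}$. What remains is to upgrade this to the full parabolic semi-norms $[D^2_{x'}u]_{z',\delta/2,\delta}$ and $[u_t]_{z',\delta/2,\delta}$, i.e.\ to capture the H\"older continuity in $t$ with the correct $\tilde\rho$-scaling.

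To this end, I would fix $z_0=(t_0,x'_0,x''_0)$ and, at each scale $r>0$, decompose $u=v+w$ on the parabolic cylinder $C_r(z_0)=\{(t,x):t_0-r^2<t<t_0,\,|x-x_0|<r\}$, where $v$ solves the homogeneous equation $Pv=0$ on $C_{2r}(z_0)$ with $v=u$ on the parabolic boundary and $w=u-v$ solves $Pw=f$ with zero boundary data. Since $a^{ij}=a^{ij}(x'')$, the operator $P$ commutes with $\partial_t$ and with $\tD^\alpha$ for $\alpha\in\bZ_+^q$, so $v_t$ and $\tD^\alpha v$ solve the same homogeneous equation in the interior; iterating parabolic interior $L^p$-estimates (Krylov--Safonov type) together with Theorem~\ref{thm2} applied to these $(t,x')$-derivatives yields pointwise bounds on $v_t$ and $D^2_{x'}v$ on $C_{r/2}(z_0)$ in terms of an $L^p$-norm of $v$ on $C_r(z_0)$, uniformly in $x''_0$. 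For the inhomogeneous part, the parabolic maximum principle (or $W^{1,2}_p$-theory) combined with the H\"older continuity of $f$ in $z'$ gives $\norm{w}_{L^\infty(C_r)}\le Nr^{2+\delta}[f]_{z',\delta/2,\delta}$ after subtracting the constant value $f(z_0)$.

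A standard Campanato iteration over concentric cylinders $C_{\theta^k r}(z_0)$, in which $u$ is compared at each scale to a polynomial in $z'$ of the form $a(t-t_0)+\tfrac12 b_{ij}(x'-x'_0)^i(x'-x'_0)^j$, then produces a decay of the local mean oscillation of $u_t$ and $D^2_{x'}u$ of order $r^\delta$; Campanato's characterization of H\"older spaces in the parabolic metric then yields $[u]_{z',1+\delta/2,2+\delta}\le N[f]_{z',\delta/2,\delta}$. The main obstacle I foresee is the interior bootstrap for $v$: since $a^{ij}$ is only measurable in $x''$, higher smoothness of $v$ can only be harvested in the $(t,x')$-directions via the commutation of $P$ with $(t,x')$-translations, so extracting $x''_0$-uniform pointwise bounds on $v_t$ and $D^2_{x'}v$ requires a delicate combination of Krylov--Safonov H\"older estimates in the $x''$-direction with iterated applications of Theorem~\ref{thm2} to the $(t,x')$-derivatives of $v$.
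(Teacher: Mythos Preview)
Your outline is a reasonable Campanato-type route, but it is not what the paper does. The paper proves Theorem~\ref{thm5} directly, without invoking Theorem~\ref{thm2} and without iterating over dyadic scales. It uses the Safonov--Trudinger mollification trick in the $z'=(t,x')$ variables: on $Q_{\kappa r}(z_0)$ one writes $u=\hat u^{\kappa r}+w+v$, where $\hat u^{\kappa r}$ is the $z'$-mollification of $u$, $w$ solves $Pw=0$ with parabolic boundary data $u-\hat u^{\kappa r}$, and $v$ solves $Pv=f-\hat f^{\kappa r}$ with zero boundary data. Lemma~\ref{lem2.25} together with the maximum principle makes both $|w|_{0;Q_{\kappa r}}$ and $|v|_{0;Q_{\kappa r}}$ small, of orders $(\kappa r)^{2+\delta}[u]_{z',1+\delta/2,2+\delta}$ and $(\kappa r)^{2+\delta}[f]_{z',\delta/2,\delta}$ respectively. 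Interior bounds \eqref{eq2.76a} on $D_{x'}^\ell D_t^m w$ come from Krylov--Safonov plus finite differences in $t$ and $x'$. One then compares $u$ on $Q_r$ to the partial Taylor polynomial $\hat T^2_{z_0'}(w+\hat u^{\kappa r})\in\hat\bP_2$, takes the supremum over $z_0$ and $r$, applies the equivalence of norms \cite[Theorem~8.5.2]{Kr96}, and absorbs the $[u]$ term by choosing $\kappa$ large. This single free parameter replaces your geometric iteration and the bookkeeping of polynomial coefficients.

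Two places in your sketch need repair before it actually runs. First, with $Pv=0$ and $Pw=f$ as you wrote them, the maximum principle only yields $|w|\le N r^2|f|_0$; the claimed bound $Nr^{2+\delta}[f]_{z',\delta/2,\delta}$ requires instead $Pv=f(z_0)$ (so that $Pw=f-f(z_0)$), after which $v_t$ and $D^2_{x'}v$ still solve the homogeneous equation and your bootstrap survives. Second, Theorem~\ref{thm2} is a global estimate on $\bR^{d+1}$ and cannot be applied to the $(t,x')$-derivatives of $v$, which live only on a bounded cylinder; the correct interior control of $v_t$ and $D^2_{x'}v$ is exactly the finite-difference argument behind \eqref{eq2.76a}, which uses nothing beyond the $z'$-translation invariance of $P$ and the local Krylov--Safonov H\"older estimate. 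No appeal to Theorem~\ref{thm2} is needed or helpful here.
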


\begin{theorem}
                                    \label{thm6}
Let $u$ be a bounded weak solution of the equation
\[\cP u=\dv \vec f \quad \text{in}\,\,\bR^{d+1},\]
where $\vec f=(f^1,\ldots,f^d)$ and $f^i\in C_{z'}^{\delta/2,\delta}(\bR^{d+1})$ for $i=1,\ldots,d$ and the coefficients $a^{ij}$ of the operator $P$ are independent of $z'$.
Then  $u\in C^{(1+\delta)/2,1+\delta}_{z'}(\bR^{d+1})$ and there is a constant $N=N(d,q,\delta,\nu)$ such that
\[
[u]_{z',(1+\delta)/2,1+\delta}\le N[\vec{f}]_{z',\delta/2,\delta}.
\]
\end{theorem}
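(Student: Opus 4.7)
The plan is to bootstrap Theorem~\ref{thm4} using the $t$-translation invariance that the coefficients $a^{ij}$ now enjoy, since they depend only on $x''$. Decompose the target semi-norm as
\begin{equation*}
[u]_{z',(1+\delta)/2,1+\delta}=[D_{x'}u]_{x',\delta}+[D_{x'}u]_{t,\delta/2}+\langle u\rangle_{1+\delta},
\end{equation*}
where the middle term denotes the $t$-H\"older part of $[D_{x'}u]_{z',\delta/2,\delta}$. The first summand is immediate from Theorem~\ref{thm4} together with $[\vec f]_{x',\delta}\le[\vec f]_{z',\delta/2,\delta}$.

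For the two $t$-H\"older pieces, the natural object is the time-difference $v^h(t,x):=u(t+h,x)-u(t,x)$, $h>0$. Because $a^{ij}(x'')$ is $t$-independent, $v^h$ is a bounded weak solution of $\cP v^h=\dv\vec g^h$ with $g^{h,i}(t,x):=f^i(t+h,x)-f^i(t,x)$, and the hypothesis on $\vec f$ gives $\|\vec g^h\|_{L^\infty}\le[\vec f]_{z',\delta/2,\delta}h^{\delta/2}$. The remaining two summands would then follow at once from the scale-invariant pointwise bounds
\begin{equation*}
\|D_{x'}v^h\|_{L^\infty}\le N[\vec f]_{z',\delta/2,\delta}h^{\delta/2}\quad\text{and}\quad \|v^h\|_{L^\infty}\le N[\vec f]_{z',\delta/2,\delta}h^{(1+\delta)/2},
\end{equation*}
which I would attempt to prove by a parabolic rescaling at scale $h^{1/2}$, reducing to a unit-scale problem with coefficients $\tilde a^{ij}(x''):=a^{ij}(h^{1/2}x'')$ (same ellipticity constant $\nu$) and a rescaled source of size $h^{(1+\delta)/2}[\vec f]_{z',\delta/2,\delta}$.

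The main obstacle is producing these two pointwise bounds with the correct powers of $h$ and without picking up any $L^\infty$-norm of $u$ or $\vec f$. Theorem~\ref{thm4} applied directly to $v^h$ is too crude because $[\vec g^h]_{x',\delta}$ does not inherit the small factor $h^{\delta/2}$, and a naive interpolation $\|D_{x'}v^h\|_{L^\infty}\le C\|v^h\|_{L^\infty}^{\delta/(1+\delta)}[v^h]_{x',1+\delta}^{1/(1+\delta)}$ introduces $\|v^h\|_{L^\infty}\le 2\|u\|_{L^\infty}$, which is not controlled by $[\vec f]_{z',\delta/2,\delta}$ alone. My plan is instead to combine Theorem~\ref{thm4} at the unit scale with a Campanato-type decay estimate on parabolic $z'$-cylinders, exploiting the $z'$-translation invariance of $\cP$ and the De~Giorgi--Moser--Nash H\"older continuity of bounded solutions of $\cP w=0$ in $(t,x)$. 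Freezing $\vec f$ in $z'$ (but not in $x''$) at the center of each cylinder and using the decay for the resulting homogeneous problem, one sets up an iterative estimate for the $L^2$-mean oscillation of $D_{x'}u$ in $z'$ that by Campanato's characterization yields $D_{x'}u\in C^{\delta/2,\delta}_{z'}$ with only the semi-norm $[\vec f]_{z',\delta/2,\delta}$ on the right-hand side. Carrying out this iteration carefully — in particular, handling the error coming from the $x''$-dependence of $\vec f$ and converting $L^2$-means in $z'$ into pointwise H\"older estimates — is the technical heart of the argument.
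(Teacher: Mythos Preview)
Your initial bootstrap via time-differences is a dead end for exactly the reason you diagnose: applying Theorem~\ref{thm4} to $v^h$ gives a bound by $[\vec g^h]_{x',\delta}$, which carries no factor of $h^{\delta/2}$, and any interpolation drags in $\|u\|_{L^\infty}$. There is no clean way to extract the pointwise bounds $\|D_{x'}v^h\|_{L^\infty}\lesssim h^{\delta/2}$ and $\|v^h\|_{L^\infty}\lesssim h^{(1+\delta)/2}$ from Theorem~\ref{thm4} alone.

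Your fallback Campanato plan is on the right track and is essentially what the paper does, but the paper's execution is both simpler and more direct: it does \emph{not} pass through Theorem~\ref{thm4} at all, and it works with $u$ itself rather than with $D_{x'}u$. The argument is a direct $z'$-analogue of the proof of Theorem~\ref{thm3}. One takes the partial mollification $\hat u^{\kappa r}$ in the $z'=(t,x')$ variables (so that $\cP\hat u^{\kappa r}=\dv\hat{\vec f}{}^{\kappa r}$ because $a^{ij}=a^{ij}(x'')$), and splits $u-\hat u^{\kappa r}=w+v$ on $Q_{\kappa r}$, where $\cP w=0$ with $w=u-\hat u^{\kappa r}$ on $\partial_p Q_{\kappa r}$, and $\cP v=\dv(\vec f-\hat{\vec f}{}^{\kappa r})$ with $v=0$ on $\partial_p Q_{\kappa r}$. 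The point of mollifying (rather than freezing at $z_0'$) is that $w$ solves a genuinely homogeneous equation, so difference quotients in both $t$ and $x'$ plus De~Giorgi--Moser--Nash give the interior estimate $[D_{x'}^\ell D_t^m w]_{0;Q_{\kappa r/2}}\le N(\kappa r)^{-\ell-2m}|w|_{0;Q_{\kappa r}}$; this replaces your proposed ``decay for the homogeneous problem'' and handles the $t$-regularity and $x'$-regularity simultaneously. The maximum principle and Lemma~\ref{lem2.25} control $|w|_{0;Q_{\kappa r}}$ by $(\kappa r)^{1+\delta}[u]_{z',(1+\delta)/2,1+\delta}$, while the energy and local-boundedness estimates control $|v|_{0;Q_{\kappa r/2}}$ by $(\kappa r)^{1+\delta}[\vec f]_{z',\delta/2,\delta}$. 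One then bounds $\inf_{p\in\hat\bP_1}|u-p|_{0;Q_r}$ and invokes the equivalence of this Campanato quantity with $[u]_{z',(1+\delta)/2,1+\delta}$ (the parabolic $z'$-version of \cite[Theorem~8.5.2]{Kr96}) to absorb the $[u]$ term by choosing $\kappa$ large. No separate treatment of $[D_{x'}u]_{x',\delta}$, $[D_{x'}u]_{t,\delta/2}$, and $\langle u\rangle_{1+\delta}$ is needed; they all come out of the single equivalent-norm step. Your worry about ``the error coming from the $x''$-dependence of $\vec f$'' is a non-issue: the mollification is only in $z'$, so $\vec f-\hat{\vec f}{}^{\kappa r}$ is automatically $O((\kappa r)^\delta[\vec f]_{z',\delta/2,\delta})$ with no $x''$-error to handle.
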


\begin{remark}
                                    \label{rem7}
Recall that the classical Schauder theory is built on the estimates of equations with constant coefficients by using a perturbation argument. In Theorem \ref{thm1}, the conditions of the coefficients $a^{ij}$ can be also relaxed to allow the dependence on $x'$.
For instance, we may assume that the coefficients $a^{ij}$ satisfy $[a^{ij}]_{x',\delta}\le K$ for some $K>0$, at the cost that $u$ should be assumed to have bounded derivatives up to second order and an additional term $N K [D^2 u]_0$ appears on the right-hand side of \eqref{eq3.58}.
See the remark at the end of the next section for the proof. All the other theorems stated above can be extended in a similar fashion as well.
\end{remark}

\begin{remark}
                                    \label{rem314}
An interesting related question is whether the partial Schauder estimates hold up to the boundary, say for equations in the half space with the zero Dirichlet condition on the boundary. In the special case that the normal direction is one of $x''$-directions, we can use the technique of odd extensions to get an equation in the whole space, and then deduce the regularity in the $x'$-directions. In general, the partial Schauder estimate does not hold up to the boundary even for the Laplace operator in the half space.
We have the following example in the half space $\{(x^1,x^2)\in\bR^2:x^1>0\}$, which is inspired by a similar example for parabolic equations recently suggested by M. V. Safonov to the authors. Let $u$ be a solution to the problem
\[
\Delta u=f:=\eta(x^1)\eta(x^2)\chi_{[0,\infty)}(x^2),\quad u(0,\cdot)=0,
\]
where $\eta$ is a smooth function on $\bR$ satisfying $\eta_1(t)=1$ for $|t|\leq 1$ and $\eta(t)=0$ for $|t|\ge 2$.
Notice that $v:=D_1^2 u$ satisfies $v(0,x^2)=\eta \chi_{[0,\infty)}(x^2)$ and $\Delta v=0$ in the strip $\{(x^1,x^2)\in \bR^2: 0<x^1<1\}$.
In particular, we have $v(0,-\epsilon)=0$ for any $\epsilon>0$.
On the other hand, it can be seen (e.g., via boundary Harnack's inequality) that for sufficiently small $\epsilon>0$ we have $v(\epsilon,-\epsilon)\ge \delta$ for some positive number $\delta$ independent of $\epsilon$.
So there is no control of the modulus of continuity of $D_1^2 u$ even if $f$ is smooth in $x^1$.
\end{remark}

\begin{remark}
                                \label{rem16}
Although in this paper we only focus on equations without lower order terms, it is worth noting that by observing the proofs below the theorems above can be extended to general linear elliptic and parabolic operators in nondivergence form
\begin{align*}
Lu&=a^{ij}(x'')D_{ij}u+b^i(x'') D_i u+c(x'')u,\\
Pu&=-u_t+a^{ij}(x'')D_{ij}u+b^i(x'') D_i u+c(x'')u,
\end{align*}
with bounded coefficients $b^i$ and $c$, and elliptic and parabolic operators in divergence form
\begin{align*}
\cL u&=D_i(a^{ij}(x'')D_j u+b^i(x'')u)+\hat b^j(x'')D_j u+c(x'') u,\\
\cP u&=-u_t+D_i(a^{ij}(x'')D_j u+b^i(x'')u)+\hat b^j(x'')D_j u+c(x'') u,
\end{align*}
with bounded coefficients $b^i,\hat b^i$ and $c$.
In these cases, an additional term $N|u|_0$ should appear on the right-hand side of the estimates.
\end{remark}

\mysection{The proofs: Elliptic estimates}		\label{sec:e}

We prove the main theorems in essence by following M. V. Safonov's idea of applying equivalent norms and representing solutions as sums of ``small'' and smooth functions.
However, his argument as reproduced in the proof of \cite[Theorem 3.4.1]{Kr96} is not directly applicable in our case by several technical reasons and to get around this difficulty we also make use of the mollification method of Trudinger \cite{Trudinger}.

For a function $v$ defined on $\bR^d$ and $\epsilon>0$, we define a \textit{partial mollification} of $v$ with respect to the first $q$ coordinates $x'$ as
\[
\tv^\epsilon(x',x''):=\frac{1}{\epsilon^q}\int_{\bR^q}v(y',x'') \zeta\left(\frac{x'-y'}{\epsilon}\right)\,dy'=
\int_{\bR^q}v(x'-\epsilon y',x'')\zeta(y')\,dy',
\]
where $\zeta(x^1,\ldots,x^q)=\prod_{i=1}^q \eta(x^i)$ and $\eta=\eta(t)$ is a smooth function on $\bR$ with a compact support in $(-1,1)$ satisfying $\int \eta=1$, $\int t \eta\,dt=0$, and $\int t^2 \eta \,dt=0$.
Then, by virtue of Taylor's formula, it is not hard to prove the following lemma for partial mollifications (see, e.g., \cite[Chapter 3]{Kr96}).

\begin{lemma}
                                    \label{lem12.25}
\begin{enumerate}[i)]
\item
Suppose $v\in C^{\delta}_{x'}(\bR^d)$. Then for any $\epsilon>0$,
\[
\epsilon^{1-\delta}\sup_{\bR^d}\, |D_{x'} \tv^\epsilon|+
\epsilon^{2-\delta}\sup_{\bR^d}\, |D^{2}_{x'} \tv^\epsilon|
 \le N(d,q,\delta,\eta) [v]_{x',\delta},
\]
\item
Suppose $v\in C_{x'}^{k+\delta}(\bR^d)\,\,(k=0,1,2)$. Then for any $\epsilon>0$,
\[
\sup_{\bR^d}|v-\tv^\epsilon| \le N(d,q,\delta,\eta)\epsilon^{k+\delta}[v]_{x',k+\delta}.
\]
\end{enumerate}
\end{lemma}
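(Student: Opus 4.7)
Both parts are standard consequences of three ingredients: (a) transferring $x'$-derivatives from $v$ onto $\zeta$, (b) rescaling via $z=(x'-y')/\epsilon$ so that differences of $v$ appear on the scale $\epsilon|z|$, and (c) exploiting the vanishing-moment conditions on $\eta$, which by the product structure $\zeta(z)=\prod_{i=1}^q\eta(z^i)$ give
\[
\int\zeta\,dz=1,\qquad \int z^i\zeta\,dz=0,\qquad \int z^iz^j\zeta\,dz=0\quad\text{for all }i,j\in\{1,\ldots,q\}.
\]
(The $i=j$ case uses $\int t^2\eta\,dt=0$; the $i\neq j$ case factors and uses $\int t\eta\,dt=0$.) Both bounds will also be uniform in $x''\in\bR^{d-q}$, since mollification is only in $x'$.

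For part (i), I would differentiate under the integral sign in the form
\[
\tv^\epsilon(x',x'')=\frac{1}{\epsilon^q}\int_{\bR^q}v(y',x'')\,\zeta\!\left(\frac{x'-y'}{\epsilon}\right)dy',
\]
so that each $x'$-derivative lands on $\zeta$ and produces a factor $\epsilon^{-1}$. Since $\int\partial^\alpha\zeta\,dz=0$ for $|\alpha|\ge 1$, I may subtract the constant $v(x',x'')$ inside the integral. After the substitution $z=(x'-y')/\epsilon$ this yields, for $|\alpha|=1,2$,
\[
\tD^\alpha\tv^\epsilon(x',x'')=\epsilon^{-|\alpha|}\int_{\bR^q}\bigl[v(x'-\epsilon z,x'')-v(x',x'')\bigr](\partial^\alpha\zeta)(z)\,dz.
\]
Bounding the bracket pointwise by $[v]_{x',\delta}(\epsilon|z|)^\delta$ and using $\partial^\alpha\zeta\in L^1(\bR^q)$ with compact support then gives $|\tD^\alpha\tv^\epsilon|\le N\epsilon^{\delta-|\alpha|}[v]_{x',\delta}$, which is exactly the claim.

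For part (ii), I would write
\[
v(x',x'')-\tv^\epsilon(x',x'')=\int_{\bR^q}\bigl[v(x',x'')-v(x'-\epsilon z,x'')\bigr]\zeta(z)\,dz
\]
and Taylor-expand $v(x'-\epsilon z,x'')$ in $x'$ to order $k$ with integral remainder. For $k=0$ the estimate is immediate from $|\zeta|\in L^1$. For $k=1$, the linear Taylor term integrates against $\zeta$ to zero by $\int z^i\zeta\,dz=0$, leaving a remainder controlled by $N\epsilon^{1+\delta}|z|^{1+\delta}[v]_{x',1+\delta}$. For $k=2$, both the linear and the quadratic Taylor terms integrate to zero by the vanishing first and second moments of $\zeta$, and the remainder is controlled by $N\epsilon^{2+\delta}|z|^{2+\delta}[v]_{x',2+\delta}$. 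In each case, integrating against $|\zeta(z)|$ over the (compact) support of $\zeta$ yields the advertised bound.

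No real obstacle is expected; the whole argument is direct Taylor expansion plus the algebraic cancellation built into the moment conditions on $\eta$. The only point requiring care is to confirm that the product structure of $\zeta$ transports the one-dimensional moment conditions to the multi-dimensional ones used above, which is just the factorization noted at the outset.
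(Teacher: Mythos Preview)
Your proposal is correct and follows exactly the route the paper indicates: the paper does not spell out a proof but simply says the lemma follows ``by virtue of Taylor's formula'' and refers to \cite[Chapter 3]{Kr96}, which is precisely the Taylor-expansion-plus-moment-cancellation argument you outline. Nothing further is needed.
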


For $k=1,2,\ldots$ denote by $\tilde\bP_k$ the set of all functions $p=p(x',x'')$ on $\bR^d$ such that  $p(x',x'')$ is a polynomial of $x'\in \bR^q$ of degree at most $k$ for any $x''$.
We will also use the following notation for a \textit{partial Taylor's polynomial} of order $k$ with respect to $x'$ of a function $v$ at a point $x_0'$:
\[
\tT^k_{x_0'}v(x',x''):=\sum_{\alpha\in \bZ_+^q,\, |\alpha|\le k} \frac{1}{\alpha!} (x'-x_0')^\alpha
\tilde{D}^\alpha v(x_0',x'').
\]


\begin{proof}[Proof of Theorem \ref{thm1}]

First we derive an a priori estimate for $u$ assuming that $u\in C_{x'}^{2+\delta}(\bR^d)$.
Let $\kappa>2$ be a number to be chosen later.
Since $a^{ij}$ are independent of $x'$, we have for any $r>0$,
\[
L \tu^{\kappa r}=\tf^{\kappa r}.
\]
Let $x_0$ be a point in $\bR^d$ and for simplicity of notation, let us write $B_r=B_r(x_0)$.
Let $w\in W^2_{d,\,loc}(B_{\kappa r})\cap C^0(\overline B_{\kappa r})$ be a unique solution of the Dirichlet problem (see \cite[Corollary 9.18]{GT})
\begin{equation}							 \label{eq1.08}
\left\{
  \begin{aligned}
    L w = 0 \quad & \hbox{in $B_{\kappa r}$;} \\
    w=u-\tu^{\kappa r} \quad & \hbox{on $\partial B_{\kappa r}$.}
  \end{aligned}
\right.
\end{equation}
By the 
maximum principle and Lemma \ref{lem12.25} ii), we obtain
\begin{equation}
                                                        \label{eq1.27}
\sup_{B_{\kappa r}}|w|= \sup_{\partial B_{\kappa r}}|w|\le N(\kappa r)^{2+\delta}[u]_{x',2+\delta}.
\end{equation}
It follows from the theory of Krylov and Safonov that $w$ is locally H\"older continuous in $B_{\kappa r}$ with a H\"older exponent $\delta_0=\delta_0(d,\nu)\in (0,1)$.
Since $a^{ij}$ are independent of $x'$, it is reasonable to expect from \eqref{eq1.08} a better interior estimate for $w$ with respect to $x'$. Indeed, by using a technique of the finite difference quotients and bootstrapping (see, e.g., \cite[\S 5.3]{CaCa95}), one easily gets from the H\"older estimates of  Krylov and Safonov that, for any integer $\ell \ge 1$,
\begin{equation}
                                            \label{eq16.41}
[D^\ell_{x'}w]_{0;B_{\kappa r/2}}\le (\kappa r)^{-\ell} N(\ell,d,q,\nu) |w|_{0;B_{\kappa r}},
\end{equation}
where we used notation
\[
[D_{x'}^\ell w]_{0,B}=\max_{\alpha\in\bZ_+^q,\, |\alpha|=\ell} [\tilde{D}^\alpha w]_{0;B};\quad [w]_{0;B}=|w|_{0;B}=\sup_B |w|.
\]
In particular, with $\ell=3$, we get
\begin{align}
                                                    \label{Eq1.48}
|w-\tT^2_{x_0'}w|_{0;B_r} &\le N r^3 [D^3_{x'} w]_{0;B_r}\le Nr^3 [D^3_{x'} w]_{0;B_{\kappa r/2}}\\
\nonumber
&\le N \kappa^{-3} |w|_{0;B_{\kappa r}}\le N\kappa^{\delta-1} r^{2+\delta}[u]_{x',2+\delta},
\end{align}
where the last inequality is due to \eqref{eq1.27}.

On the other hand, it is clear that $v:=u-\tu^{\kappa r}-w$ satisfies
\begin{equation}
                                            \label{eq18.48}
\left\{
  \begin{aligned}
    L v = f-\tf^{\kappa r} \quad & \hbox{in $B_{\kappa r}$;} \\
    v=0 \quad & \hbox{on $\partial B_{\kappa r}$.}
  \end{aligned}
\right.
\end{equation}
Therefore, by the maximum principle and Lemma~\ref{lem12.25} ii) we have
\begin{equation}
                                                \label{eq2.14}
|u-\tu^{\kappa r}-w|_{0;B_{\kappa r}}\le N(\kappa r)^{2+\delta} [f]_{x',\delta}.
\end{equation}
By Lemma \ref{lem12.25} i), we also get
\begin{equation}
                                                \label{eq2.17}
|\tu^{\kappa r}-\tT^2_{x_0'} \tu^{\kappa r}|_{0;B_r} \le Nr^3 [D_{x'}^3 \tu^{\kappa r}]_{0;B_r}
\le N\kappa^{\delta-1} r^{2+\delta}[u]_{x',2+\delta}.
\end{equation}
Take $p=\tT^2_{x_0'}w+\tT^2_{x_0'} \tu^{\kappa r}\in \tilde\bP_2$. Then
combining \eqref{Eq1.48}, \eqref{eq2.14}, and \eqref{eq2.17} yields
\begin{align*}
|u-p|_{0;B_r}
&\le |u-\tu^{\kappa r}-w|_{0;B_r}+|\tu^{\kappa r}-\tT^2_{x_0'} \tu^{\kappa r}|_{0;B_r}+|w-\tT^2_{x_0'}w|_{0;B_r}\\
&\le N\kappa^{\delta-1} r^{2+\delta}[u]_{x',2+\delta}+N(\kappa r)^{2+\delta} [f]_{x',\delta}.
\end{align*}
This obviously implies
\begin{equation}
                                \label{eq2.28}
r^{-2-\delta}\,\inf_{p\in\tilde\bP_2}|u-p|_{0;B_r(x_0)}\le N\kappa^{\delta-1} [u]_{x',2+\delta}+N \kappa^{2+\delta} [f]_{x',\delta},
\end{equation}
for any $x_0\in \bR^d$ and $r>0$. We take the supremum of the left-hand side \eqref{eq2.28} with respect to $x_0\in \bR^d$ and $r>0$, and then apply \cite[Theorem 3.3.1]{Kr96} to get
\begin{equation}
                                \label{eq2.33}
[u]_{x',2+\delta}\le N\kappa^{\delta-1} [u]_{x',2+\delta}+N\kappa^{2+\delta} [f]_{x',\delta}.
\end{equation}
To finish the proof of \eqref{eq3.58} for $u\in C_{x'}^{2+\delta}(\bR^d)$, it suffices to choose a large $\kappa$ such that $N\kappa^{\delta-1}<1/2$.

Now we drop the assumption that $u\in C_{x'}^{2+\delta}(\bR^d)$ by another use of the partial mollification method. As noted earlier in the proof, since $a^{ij}$ are independent of $x'$, we have
\[
L \tilde u^{1/k}=\tilde f^{1/k},\qquad k=1,2,\ldots.
\]
Since $\tilde u^{1/k}\in C_{x'}^{2+\delta}(\bR^d)$, by the argument above, we have a uniform estimate
\[
[\tilde u^{1/k}]_{x',2+\delta}\le N[\tilde f^{1/k}]_{x',\delta}\le N[f]_{x',\delta},\quad k=1,2,\ldots.
\]
Moreover, $[\tilde u^{1/k}]_0\le [u]_0$ and $\tilde u^{1/k}$ converges locally uniformly to $u$ as $k$ tends to infinity. We thus conclude that  $u\in C_{x'}^{2+\delta}(\bR^d)$ and \eqref{eq3.58} holds. The theorem is proved.
\end{proof}

\begin{remark}
                                    \label{rem15}
In fact, the operator $L$ in Theorem \ref{thm1} is allowed to be degenerate in the $x''$ direction; i.e., the uniform ellipticity condition \eqref{elliptic} can be replaced by the following degenerate ellipticity condition\footnote{We would like to thank the referee for pointing this out to us.}
\[
\nu|\xi'|^2\le a^{ij}(x)\xi^i\xi^j \le \nu^{-1}|\xi|^2,\quad\forall x\in\bR^{d},\,\, \xi\in \bR^d,
\]
for some constant $\nu\in (0,1]$.
The reason is sketched as follows.
Denote by $B_r'$  the $q$-dimensional ball of radius $r$ centered at the origin. Let $w$ be the solution of
\[
\left\{
  \begin{aligned}
    L w = 0 \quad & \hbox{in $B_{\kappa r}'\times \bR^{d-q}$;} \\
    w=u-\tu^{\kappa r} \quad & \hbox{on $B_{\kappa r}'\times \bR^{d-q}$.}
  \end{aligned}
\right.
\]
Then $v:=u-\tu^{\kappa r}-w$ satisfies
\[
\left\{
  \begin{aligned}
    L v = f-\tf^{\kappa r} \quad & \hbox{in $B_{\kappa r}'\times \bR^{d-q}$;} \\
    v=0 \quad & \hbox{on $\partial B_{\kappa r}'\times \bR^{d-q}$,}
  \end{aligned}
\right.
\]
instead of \eqref{eq18.48}.
Notice that we still have the estimates \eqref{eq1.27} and \eqref{eq2.14}, but the Krylov-Safonov estimate is not available here since the equation is degenerate.
Instead, we prove \eqref{eq16.41} by using Bernstein's method; see, for instance,  \cite[Theorem 8.4.4]{Kr96}.
Let $\zeta\in C_c^\infty(B_1)$ be a cut-off function such that $\zeta=1$ on $B_{1/2}$. Denote $\zeta_{\kappa r}(x)=\zeta(x/\kappa r)$.
Consider the function
\[
W:=\zeta_{\kappa r}^2|D_{x'}w|^2+\mu(\kappa r)^{-2}|w|^2,
\]
where $\mu>0$ is a constant to be chosen later. Since $Lw=0$ and $L(D_{x'}w)=0$ in $B_{\kappa r}$, we have
\begin{align*}
LW&=2\mu(\kappa r)^{-2} a^{ij}D_i wD_j w+2(a^{ij}D_i\zeta_{\kappa r} D_j\zeta_{\kappa r}+\zeta_{\kappa r} L\zeta_{\kappa r})|D_{x'}w|^2\\
&\,\,+8a^{ij}\zeta_{\kappa r} D_i \zeta_{\kappa r} D_j D_{x'} w \cdot D_{x'}w+2\zeta_{\kappa r}^2 a^{ij}D_i D_{x'} u\cdot D_j D_{x'} u.
\end{align*}
By using Cauchy-Schwarz inequality
\[
\abs{a^{ij} \xi^i \eta^j} \le \sqrt{a^{ij}\xi^i \xi^j} \sqrt{a^{ij} \eta^i \eta^j}, \quad\forall \xi,\eta \in \bR^d,
\]
and Cauchy's inequality with $\epsilon$, we get
\begin{align*}
LW&\ge 2\mu(\kappa r)^{-2} a^{ij}D_i wD_j w+2(-3a^{ij}D_i\zeta_{\kappa r} D_j\zeta_{\kappa r}+\zeta_{\kappa r} L\zeta_{\kappa r})|D_{x'}w|^2\\
&\ge 2\left(\mu(\kappa r)^{-2}\nu-3a^{ij}D_i\zeta_{\kappa r} D_j\zeta_{\kappa r}+\zeta_{\kappa r} L\zeta_{\kappa r}\right)|D_{x'}w|^2\ge 0
\end{align*}
provided that $\mu$ is chosen sufficiently large. Therefore, by the maximum principle, we have
\[
|D_{x'}w|^2_{0;B_{\kappa r/2}}\le |W|_{0; B_{\kappa r}} \le |W|_{0;\partial B_{\kappa r}}\le N(\kappa r)^{-2}|w|^2_{0;B_{\kappa r}},
\]
which gives \eqref{eq16.41} for $\ell=1$. The general case can be deduced by an induction. The rest of the proof remains valid.
\end{remark}

\begin{proof}[Proof of Theorem \ref{thm3}]
As in the proof of Theorem~\ref{thm1}, let us first assume that  $u\in C^{1+\delta}_{x'}(\bR^d)$. Let $\kappa>2$ be a number to be chosen later.
Since $a^{ij}$ are independent of $x'$, we have for any $r>0$,
\begin{equation}
                                                    \label{eq3.05}
\cL \tu^{\kappa r}=\dv \tvf{}^{\kappa r}.
\end{equation}
Let $x_0$ be a point in $\bR^d$ and write $B_R=B_R(x_0)$.
Let $w\in W^1_2(B_{\kappa r})$ be a unique solution of the generalized Dirichlet problem (see \cite[Theorem~8.3]{GT})
\begin{equation}							 \label{eq3.08}
\left\{
  \begin{aligned}
    \cL w = 0 \quad & \hbox{in $B_{\kappa r}$;} \\
    w=u-\tu^{\kappa r} \quad & \hbox{on $\partial B_{\kappa r}$.}
  \end{aligned}
\right.
\end{equation}
By the weak maximum principle (see \cite[Theorem 8.1]{GT}) and Lemma \ref{lem12.25} ii), we obtain
\begin{equation}
                                                        \label{eq3.27}
\sup_{B_{\kappa r}}|w|= \sup_{\partial B_{\kappa r}} |w| \le N(\kappa r)^{1+\delta}[u]_{x',1+\delta}.
\end{equation}
It follows from the well-known De Giorgi-Moser-Nash theory that $w$ is locally H\"older continuous in $B_{\kappa r}$ with some exponent $\delta_0=\delta_0(d,\nu)\in (0,1)$. Again we use the technique of the finite difference quotients and bootstrapping to get that, for any integer $\ell \ge 1$,
\[
[D^\ell_{x'} w]_{0;B_{\kappa r/2}}\le N(\kappa r)^{-\ell} N(\ell,d,q,\nu) |w|_{0;B_{\kappa r}}.
\]
In particular, with $\ell=2$, we get
\begin{align}
                                                    \label{Eq3.48}
|w-\tT^1_{x_0'}w|_{0;B_r} &\le N r^2[D^2_{x'} w]_{0;B_r} \le Nr^2[D^2_{x'} w]_{0;B_{\kappa r/2}}\\
\nonumber
&\le N\kappa^{-2} |w|_{0;B_{\kappa r}}\le N\kappa^{\delta-1} r^{1+\delta}[u]_{x',1+\delta},\end{align}
where the last inequality is due to \eqref{eq3.27}.

On the other hand, $v:=u-\tu^{\kappa r}-w$ satisfies
\[
\left\{
  \begin{aligned}
    \cL v = \dv(\vec{f}-\tvf{}^{\kappa r}) \quad & \hbox{in $B_{\kappa r}$;} \\
    v=0 \quad & \hbox{on $\partial B_{\kappa r}$.}
  \end{aligned}
\right.
\]
By taking $v$ itself as a test function for the above equation, we get
\begin{equation}
\label{eq3.10v}
\|Dv\|_{L_2(B_{\kappa r})}
\le N\|\vec{f}-\tvf{}^{\kappa r}\|_{L_2(B_{\kappa r})}.
\end{equation}
To obtain an a priori bound for $v$, we first use a local boundedness estimate for the weak solution $v$ (see e.g., \cite[Theorem 8.17]{GT}) and get
\[
|v|_{0;B_{\kappa r/2}}\le N\kappa r |\vec{f}-\tvf{}^{\kappa r}|_{0;B_{\kappa r}}+N(\kappa r)^{-d/2}\|v\|_{L_2(B_{\kappa r})}.
\]
Then the Poincar\'e inequality (see e.g., \cite[(7.44)]{GT})
\[
\|v\|_{L_2(B_{\kappa r})}\le N\kappa r\|Dv\|_{L_2(B_{\kappa r})}
\]
together with \eqref{eq3.10v} and Lemma~\ref{lem12.25} ii) yields
\begin{equation}
                                                \label{eq3.14}
|v|_{0;B_{\kappa r/2}}\le N(\kappa r)^{1+\delta} [\vec{f}]_{x',\delta}.
\end{equation}
By Lemma~\ref{lem12.25} i), we also get
\begin{equation}
                                                \label{eq3.17}
|\tu^{\kappa r}-\tT^1_{x_0'} \tu^{\kappa r}|_{0;B_r}
\le Nr^2 [D_{x'}^2 \tu^{\kappa r}]_{0;B_r}
\le N\kappa^{\delta-1} r^{1+\delta}[u]_{x',1+\delta}.
\end{equation}
Take $p=\tT^1_{x_0'}w+\tT^1_{x_0'} \tu^{\kappa r}\in\tilde\bP_1$. Then
by \eqref{Eq3.48}, \eqref{eq3.14}, and \eqref{eq3.17}, we get
\begin{align*}
|u-p|_{0;B_r}
&\le |u-\tu^{\kappa r}-w|_{0;B_r}+|\tu^{\kappa r}-\tT^1_{x_0'} \tu^{\kappa r}|_{0;B_r}+|w-\tT^1_{x_0'}w|_{0;B_r}\\
&\le N\kappa^{\delta-1} r^{1+\delta}[u]_{x',1+\delta}+N(\kappa r)^{1+\delta} [\vec{f}]_{x',\delta}.
\end{align*}
The rest of proof is almost identical to that of Theorem~\ref{thm1} and omitted.
\end{proof}

\begin{remark}                                      \label{rem23.52}
We now give a proof of the claim made in Remark \ref{rem7}. Let
\[
L':=a^{ij}(x_0',x'')D_{ij}
\] and $w$ be the solution of \eqref{eq1.08} with $L'$ in place of $L$.
Let us also denote
\[
g=\big(a^{ij}(x_0',x'')-a^{ij}(x',x'')\big)D_{ij}u.
\]
Then, instead of \eqref{eq18.48}, $v$ satisfies the problem
\[
\left\{
  \begin{aligned}
   L'v=f-\tilde f^{\kappa r}+g-\tilde g^{\kappa r} \quad & \hbox{in $B_{\kappa r}$;} \\
    v=0 \quad & \hbox{on $\partial B_{\kappa r}$.}
  \end{aligned}
\right.
\]
Notice that we have (recall $\kappa >2$)
\[
|\tilde g^{\kappa r}|_{0;B_{\kappa r}}+|g|_{0;B_{\kappa r}}\le N K (\kappa r)^\delta [D^2 u]_{0}.
\]
Then similarly to \eqref{eq2.14}, the maximum principle yields
\[
|v|_{0;B_{\kappa r}}\le N(\kappa r)^{2+\delta} ([f]_{x',\delta}+K[D^2 u]_{0}).
\]
The rest of the proof is almost the same as that of Theorem~\ref{thm1}.
\end{remark}

\mysection{The proofs: Parabolic estimates}	\label{sec:p}

The proofs are similar to those in the previous section but some adjustments are needed.

\begin{proof}[Proofs of Theorem \ref{thm2} and \ref{thm4}]
Since we are dealing with partial H\"older semi-norms  with respect to $x'$ and not with respect to $t$,
the proofs of Theorems \ref{thm2} and \ref{thm4} are completely analogous to those of Theorems \ref{thm1} and \ref{thm3}.
We simply have to replace $B_r$ by $Q_r$, elliptic estimates by corresponding parabolic estimates, etc.
Since we will replicate very similar arguments in the proofs of Theorems \ref{thm5} and \ref{thm6} below, we omit the details here.
\end{proof}

We introduce a few more notation for the proofs of Theorems \ref{thm5} and \ref{thm6}.
For $z=(t,x)\in \bR^{d+1}$ let $Q_\rho(z)=(t-\rho^2,t)\times B_\rho(x)$ and $\partial_p Q_\rho(z)$ be its parabolic boundary.
We denote by $\hat\bP_1$ the set of all functions $p$ on $\bR^{d+1}$ of the form
\[
p(z)=p(t,x',x'')=\sum_{i=1}^q \alpha^i(x'') x^i + \beta(x''),
\]
and by $\hat\bP_2$ the set of all functions $p$ on $\bR^{d+1}$ of the form
\[
p(z)=p(t,x',x'')=\alpha(x'')t+\sum_{i=1}^q \alpha^i(x'') x^i+ \sum_{i,j=1}^q \alpha^{ij}(x'') x^i x^j + \beta(x'').
\]
Then we define the first-order partial Taylor's polynomial with respect to $z'=(t,x')$ of a function $v$ on $\bR^{d+1}$ at a point $z_0'=(t_0,x_0')$ as
\[
\hT^1_{z_0'}v(z',x''):=v(z_0',x'')+\sum_{i=1}^q  D_i v(z_0',x'') (x^i-x_0^i),
\]
and the second-order partial Taylor's polynomial of $v$ at $z_0'$ as
\begin{align*}
\hT^2_{z_0'}v(z',x'')&:=v(z_0',x'')+v_t(z_0',x'')(t-t_0)+\sum_{i=1}^q  D_i v(z_0',x'') (x^i-x_0^i)\\
&\quad+\frac{1}{2}\sum_{i,j=1}^q D_{ij} v(z_0',x'')(x^i-x_0^i)(x^j-x_0^j).
\end{align*}
Let $\zeta(z')=\zeta(t,x^1,\ldots,x^q)=\eta(t)\prod_{i=1}^q\eta(x^i)$, where $\eta$ is the same function as given in the previous section.
For $\epsilon>0$ let $\zeta_\epsilon(t,x')=\epsilon^{-q-2}\zeta(\epsilon^{-2}t,\epsilon^{-1}x')$ and define a partial mollification of $v$ with respect to $z'$ as
\begin{align*}
\hat v^\epsilon(t,x',x'')&=\int_{\bR^{q+1}}v(s,y',x'') \zeta_\epsilon(t-s,x'-y')\,ds dy'\\
&=\int_{\bR^{q+1}}v(t-\epsilon^2s,x'-\epsilon y',x'') \zeta(s,y')\,ds dy'
\end{align*}
The following lemma, the proof of which we also omit, is a parabolic analogue of Lemma~\ref{lem12.25}.
\begin{lemma}
                                    \label{lem2.25}
\begin{enumerate}[i)]
\item
Suppose $v\in C^{\delta/2,\delta}_{z'}(\bR^{d+1})$. Then for any $\epsilon>0$,
\[
\epsilon^{2-\delta}\sup_{\bR^{d+1}} |D_t \hat v^\epsilon|
+\epsilon^{2-\delta}\sup_{\bR^{d+1}} |D_{x'}^2 \hat v^\epsilon|
+\epsilon^{1-\delta} \sup_{\bR^{d+1}} |D_{x'} \hat v^\epsilon| \le N(d,q,\delta,\eta)[v]_{z',\delta/2,\delta}.
\]
\item
Suppose $v\in C_{z'}^{(k+\delta)/2,k+\delta}(\bR^{d+1})\,\,(k=0,1,2)$. Then for any $\epsilon>0$,
\[
\sup_{\bR^{d+1}}|v-\hat v^\epsilon| \le N(d,q,\delta,\eta)\epsilon^{k+\delta}[v]_{z',(k+\delta)/2,k+\delta}.
\]
\end{enumerate}
\end{lemma}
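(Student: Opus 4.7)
The plan is to establish both parts by standard mollification arguments, differentiating $\hat v^\epsilon$ under the integral and transferring derivatives onto $\zeta_\epsilon$, then exploiting the vanishing moments inherited from the choice of $\eta$ (namely $\int \eta = 1$, $\int t\eta\,dt = 0$, $\int t^2\eta\,dt = 0$) together with the product structure $\zeta(s,y') = \eta(s)\prod_{i=1}^{q}\eta(y^i)$. A direct rescaling gives $\|\zeta_\epsilon\|_{L_1}=1$, $\|D_{x'}\zeta_\epsilon\|_{L_1}\le N\epsilon^{-1}$, and $\|D_t\zeta_\epsilon\|_{L_1}+\|D^2_{x'}\zeta_\epsilon\|_{L_1}\le N\epsilon^{-2}$, which will drive all of the estimates.

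For part i) I would differentiate under the integral, for instance
\[
D_{x^i x^j}\hat v^{\epsilon}(t,x',x'') = \int v(s,y',x'')\, D_{x^i x^j}\zeta_\epsilon(t-s,x'-y')\,ds\,dy',
\]
and similarly for $D_t\hat v^\epsilon$ and $D_{x^i}\hat v^\epsilon$. Since $\zeta$ has compact support, every derivative of $\zeta_\epsilon$ integrates to zero, so I may freely subtract the constant $v(t,x',x'')$ inside the integrand without altering the value. On the support of $\zeta_\epsilon(t-s,x'-y')$ we have $|x'-y'|\le\epsilon$ and $|t-s|\le\epsilon^2$, hence $\tilde{\rho}((t,x'),(s,y'))\le 2\epsilon$, and therefore $|v(s,y',x'')-v(t,x',x'')|\le N\epsilon^{\delta}[v]_{z',\delta/2,\delta}$ pointwise. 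Combining this pointwise bound with the three scaling estimates on $\|\cdot\|_{L_1}$ yields all three inequalities in i) at once.

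For part ii) with $k=0$, the identity $\int \zeta_\epsilon = 1$ directly converts the difference $v-\hat v^\epsilon$ into an integral of $v(t,x',x'')-v(s,y',x'')$ against $\zeta_\epsilon$, which is bounded by $N\epsilon^\delta[v]_{z',\delta/2,\delta}$. For $k=2$, I would expand $v(s,y',x'')$ in a Taylor polynomial of degree two in $z'=(t,x')$ around $(t,x')$. The separable form of $\zeta$ plus the vanishing moments applied coordinatewise in $s$ and in each $y^i$ kill every term of order $\le 2$ (including the mixed contributions such as $(s-t)(y^i-x^i)$ and $(y^i-x^i)(y^j-x^j)$) after integration against $\zeta_\epsilon$. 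The Taylor remainder is estimated pointwise on the support by $N\epsilon^{2+\delta}[v]_{z',1+\delta/2,2+\delta}$ using the partial Hölder continuity of $v_t$ and $D_{ij}v$ in $z'$, so integration against $\zeta_\epsilon$ (whose $L_1$ norm is $1$) finishes the case $k=2$.

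The main obstacle is the case $k=1$: the assumption $v\in C^{(1+\delta)/2,1+\delta}_{z'}$ furnishes neither $D^2_{x'}v$ nor $v_t$, so a naive Taylor expansion is unavailable. The workaround is to use only the first-order cancellations $\int(y^i-x^i)\zeta_\epsilon=0$ to write
\[
v(t,x',x'')-\hat v^\epsilon(t,x',x'') = \int\bigl[v(t,x',x'')-v(s,y',x'')+D_{x'}v(t,x',x'')\cdot(y'-x')\bigr]\zeta_\epsilon\,ds\,dy',
\]
and then split the bracket into three pieces:
\[
\bigl[v(s,x',x'')-v(s,y',x'')+D_{x'}v(s,x',x'')\cdot(y'-x')\bigr]+\bigl[v(t,x',x'')-v(s,x',x'')\bigr]+\bigl[D_{x'}v(t,x',x'')-D_{x'}v(s,x',x'')\bigr]\cdot(y'-x').
\]
The first piece is of order $\epsilon^{1+\delta}[D_{x'}v]_{z',\delta/2,\delta}$ by a first-order Taylor expansion in $y'$ at fixed $s$; the second is of order $\epsilon^{1+\delta}\ip{v}_{1+\delta}$ directly from the definition of $\ip{v}_{1+\delta}$ since $|s-t|^{(1+\delta)/2}\le \epsilon^{1+\delta}$; and the third is of order $\epsilon^{\delta}\cdot\epsilon\cdot[D_{x'}v]_{z',\delta/2,\delta}=\epsilon^{1+\delta}[D_{x'}v]_{z',\delta/2,\delta}$. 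Summing and integrating against $\zeta_\epsilon$ gives the $k=1$ bound $N\epsilon^{1+\delta}[v]_{z',(1+\delta)/2,1+\delta}$, completing the lemma.
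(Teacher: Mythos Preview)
Your argument is correct and is precisely the Taylor-formula-plus-vanishing-moments computation that the paper has in mind; note that the paper itself omits the proof, remarking only that it is a parabolic analogue of Lemma~\ref{lem12.25} (which in turn is attributed to Taylor's formula and \cite[Chapter~3]{Kr96}). Your handling of the delicate case $k=1$, where $v_t$ and $D^2_{x'}v$ need not exist, via the three-term splitting that isolates $\ip{v}_{1+\delta}$ and $[D_{x'}v]_{z',\delta/2,\delta}$, is exactly the right workaround.
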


\begin{proof}[Proof of Theorem \ref{thm5}]
As in the proof of Theorem~\ref{thm1}, we may assume that $u\in C^{1+\delta/2,2+\delta}_{z'}(\bR^{d+1})$. Let $\kappa>2$ be a number to be chosen later.
Since $a^{ij}=a^{ij}(x'')$ are independent of $z'$, we have for any $r>0$,
\[
P \hat u^{\kappa r}=
\hat f^{\kappa r}.
\]
For $z_0\in\bR^{d+1}$ let us write $Q_\rho=Q_\rho(z_0)$.
Let $w\in W^{1,2}_{d+1,\,loc}(Q_{\kappa r})\cap C^0(\overline Q_{\kappa r})$ be a unique strong solution of the problem (see \cite[Theorem 7.17]{Lieberman})
\begin{equation}							 \label{eq1.08p}
\left\{
  \begin{aligned}
    P w = 0 \quad & \hbox{in $Q_{\kappa r}$;} \\
    w=u-\hat u^{\kappa r} \quad & \hbox{on $\partial_p Q_{\kappa r}$.}
  \end{aligned}
\right.
\end{equation}
By the maximum principle and Lemma \ref{lem2.25} ii), we obtain
\begin{equation}
                                                        \label{eq1.27p}
\sup_{Q_{\kappa r}}|w|= \sup_{\partial_p Q_{\kappa r}}|w|\le N(\kappa r)^{2+\delta}[u]_{z',1+\delta/2,2+\delta}.
\end{equation}
It follows from the Krylov-Safonov theory that $w\in C^{\delta_2/2,\delta_0}_{loc}(Q_{\kappa r})$ for some exponent $\delta_0=\delta_0(d,\nu)\in (0,1)$.
Since $a^{ij}$ are independent of $z'$, as in the proof of Theorem~\ref{thm1}, we have for any integers $\ell, m \ge 0$,
\begin{equation}
\label{eq2.76a}
[D^\ell_{x'} D^m_t w]_{0;Q_{\kappa r/2}}\le (\kappa r)^{-\ell-2m} N(\ell,m,d,q,\nu)
|w|_{0;Q_{\kappa r}},
\end{equation}
where we used notations
\[
[D_{x'}^\ell D^m_t w]_{0,Q}=\max_{\alpha\in\bZ_+^q,\, |\alpha|=\ell} [\tilde{D}^\alpha D^m_t w]_{0;Q};\quad [w]_{0;Q}=|w|_{0;Q}=\sup_Q |w|.
\]
Notice that Taylor's formula yields (see \cite[Theorem 8.6.1]{Kr96})
\begin{equation}
\label{eq2.60q}
|w-\hT^2_{z_0'}w|_{0;Q_r} \le N r^4 [D^2_t w]_{0;Q_r}+ N r^3 [D_{x'} D_t w]_{0;Q_r}+Nr^3 [D^3_{x'} w]_{0;Q_r}.
\end{equation}
Then we obtain from \eqref{eq2.60q}, \eqref{eq2.76a}, and \eqref{eq1.27p}
\begin{align}
                                                    \label{Eq1.48p}
|w-\hT^2_{z_0'}w|_{0;Q_r} &\le N\kappa^{-4} |w|_{0;Q_{\kappa r}}+ N\kappa^{-3} |w|_{0;Q_{\kappa r}}+ N\kappa^{-3} |w|_{0;Q_{\kappa r}}\\
\nonumber
&\le N \kappa^{-3}|w|_{0;Q_{\kappa r}}\le N\kappa^{\delta-1} r^{2+\delta}[u]_{z',1+\delta/2,2+\delta}.
\end{align}

On the other hand, $v:=u-\hat u^{\kappa r}-w$ satisfies
\[
\left\{
  \begin{aligned}
    Pv = f-\hat f^{\kappa r} \quad & \hbox{in $Q_{\kappa r}$;} \\
    v=0 \quad & \hbox{on $\partial_p Q_{\kappa r}$.}
  \end{aligned}
\right.
\]
Therefore, by the maximum principle and Lemma~\ref{lem12.25} ii) we have
\begin{equation}
                                                \label{eq2.14p}
|u-\hat u^{\kappa r}-w|_{0;Q_{\kappa r}}\le N(\kappa r)^{2+\delta} [f]_{z',\delta/2,\delta}.
\end{equation}
Then by \eqref{eq2.60q} and Lemma \ref{lem2.25} i), we get
\begin{align}
\label{eq2.17p}
|\hat u^{\kappa r}-\hT^2_{z_0'}\hat u^{\kappa r}|_{0;Q_r} &\le N \kappa^{\delta-2} r^{2+\delta} [u]_{z',1+\delta/2,2+\delta}+ N \kappa^{\delta-1}r^{2+\delta} [u]_{z',1+\delta/2,2+\delta}\\
\nonumber
&\le N \kappa^{\delta-1} r^{2+\delta} [u]_{z',1+\delta/2,2+\delta}.
\end{align}
Take $p=\hT^2_{z_0'}w+\hT^2_{z_0'} \hat u^{\kappa r}\in \hat\bP_2$. Then
combining \eqref{Eq1.48p}, \eqref{eq2.14p}, and \eqref{eq2.17p} yields
\begin{align*}
|u-p|_{0;Q_r}
&\le |u-\hat u^{\kappa r}-w|_{0;Q_r}+|\hat u^{\kappa r}-\hT^2_{z_0'} \hat u^{\kappa r}|_{0;Q_r}+|w-\hT^2_{z_0'}w|_{0;Q_r}\\
&\le N\kappa^{\delta-1} r^{2+\delta}[u]_{z',1+\delta/s,2+\delta}+N(\kappa r)^{2+\delta} [f]_{z',\delta/2,\delta}.
\end{align*}
Therefore, we have
\begin{equation}
                                \label{eq2.28p}
r^{-2-\delta}\,\inf_{p\in\hat\bP_2}|u-p|_{0;Q_r(z_0)}\le N\kappa^{\delta-1} [u]_{z',1+\delta/2,2+\delta}+N \kappa^{2+\delta} [f]_{z',\delta/2,\delta},
\end{equation}
for any $z_0\in \bR^{d+1}$ and $r>0$. By taking the supremum in \eqref{eq2.28p} and then applying \cite[Theorem 8.5.2]{Kr96}, we get
\begin{equation}
                                \label{eq2.33p}
[u]_{z',1+\delta/2,2+\delta}\le N\kappa^{\delta-1} [u]_{z',1+\delta/2,2+\delta}+N\kappa^{2+\delta} [f]_{z',\delta/2,\delta}.
\end{equation}
The rest of proof is repetitive and omitted.
\end{proof}

\begin{proof}[Proof of Theorem \ref{thm6}]
We proceed similarly as in the proof of Theorem~\ref{thm1} and assume that $u\in C^{(1+\delta)/2,1+\delta}_{z'}(\bR^{d+1})$. Let $\kappa>2$ be a number to be chosen later.
Since $a^{ij}=a^{ij}(x'')$ are independent of $z'$, we have for any $r>0$,
\[
\cP \hat u^{\kappa r}=\dv \hat{\vec{f}}{}^{\kappa r}.
\]
For $z_0\in\bR^{d+1}$ write $Q_\rho=Q_\rho(z_0)$.
Let $w\in V_2(Q_{\kappa r})$ be a generalized solution of the boundary value problem (see \cite[\S III.4]{LSU})
\begin{equation}							 \label{eq3.08p}
\left\{
  \begin{aligned}
    \cP w = 0 \quad & \hbox{in $Q_{\kappa r}$;} \\
    w=u-\hat u^{\kappa r} \quad & \hbox{on $\partial_p Q_{\kappa r}$.}
  \end{aligned}
\right.
\end{equation}
By the maximum principle (see \cite[\S III.7]{LSU}) and Lemma \ref{lem2.25} ii), we obtain
\begin{equation}
                                                        \label{eq3.27p}
\sup_{Q_{\kappa r}}|w|= \sup_{\partial_p Q_{\kappa r}} |w| \le N(\kappa r)^{1+\delta}[u]_{z',(1+\delta)/2,1+\delta}.
\end{equation}
By the De Giorgi-Moser-Nash theory we have $w\in C^{\delta_2/2,\delta_0}_{loc}(Q_{\kappa r})$ for some exponent $\delta_0=\delta_0(d,\nu)\in (0,1)$.
Using the assumption that $a^{ij}$ are independent of $z'$ and arguing as before, we obtain the interior estimate \eqref{eq2.76a}.
Then by \eqref{eq2.76a} and \eqref{eq3.27p}, we get
\begin{align}
                                                    \label{Eq3.48p}
|w-\hT^1_{z_0'}w|_{0;Q_r} &\le N r^2([D^2_{x'} w]_{0;Q_r}+[D_t w]_{0;Q_r})\\
&\le Nr^2([D^2_{x'} w]_{0;Q_{\kappa r/2}}+[D_t w]_{0;Q_{\kappa r/2}})\nonumber \\
\nonumber
&\le N\kappa^{-2} |w|_{0;Q_{\kappa r}}\le N\kappa^{\delta-1} r^{1+\delta} [u]_{z',(1+\delta)/2,1+\delta}.
\end{align}
Notice that $v:=u-\hat u^{\kappa r}-w$ is a generalized solution from $V_2(Q_{\kappa r})$ of the problem
\[
\left\{
  \begin{aligned}
    \cP v= \dv(\vec{f}-\hat{\vec f}{}^{\kappa r}) \quad & \hbox{in $Q_{\kappa r}$;} \\
    v=0 \quad & \hbox{on $\partial_p Q_{\kappa r}$.}
  \end{aligned}
\right.
\]
By taking $v$ itself as a test function for the above equation, we get
\begin{equation}
\label{eq3.10vp}
\|Dv\|_{L_2(Q_{\kappa r})}
\le N\|\vec{f}-\hat{\vec f}{}^{\kappa r}\|_{L_2(Q_{\kappa r})}.
\end{equation}
By a local boundedness estimate (see e.g., \cite[Theorem 6.17]{Lieberman}), we have
\[
|v|_{0;Q_{\kappa r/2}}\le N\kappa r |\vec{f}-\hat{\vec f}{}^{\kappa r}|_{0;Q_{\kappa r}}+N(\kappa r)^{-(d+2)/2}\|v\|_{L_2(Q_{\kappa r})}.
\]
Then the Poincar\'e inequality
\[
\|v\|_{L_2(Q_{\kappa r})}\le N\kappa r\|Dv\|_{L_2(Q_{\kappa r})}
\]
together with \eqref{eq3.10vp} and Lemma~\ref{lem2.25} ii) yields
\begin{equation}
                                                \label{eq3.14p}
|v|_{0;Q_{\kappa r/2}}\le N(\kappa r)^{1+\delta} [\vec{f}]_{z',\delta/2,\delta}.
\end{equation}
By Lemma~\ref{lem2.25} i), we also get
\begin{align}
                                                \label{eq3.17p}
|\hat u^{\kappa r}-\hT^1_{z_0'} \hat u^{\kappa r}|_{0;Q_r}
&\le Nr^2 ([D_{x'}^2 \hat u^{\kappa r}]_{0;Q_r}+[D_t \hat u^{\kappa r}]_{0;Q_r})\\
\nonumber
&\le N\kappa^{\delta-1} r^{1+\delta}[u]_{z',(1+\delta)/2,1+\delta}.
\end{align}
Take $p=\hT^1_{z_0'}w+\hT^1_{z_0'} \hat u^{\kappa r}\in\hat\bP_1$. Then
by \eqref{Eq3.48p}, \eqref{eq3.14p}, and \eqref{eq3.17p}, we get
\begin{align*}
|u-p|_{0;Q_r}
&\le |u-\hat u^{\kappa r}-w|_{0;Q_r}+|\hat u^{\kappa r}-\hT^1_{z_0'} \hat u^{\kappa r}|_{0;Q_r}+|w-\hT^1_{z_0'}w|_{0;Q_r}\\
&\le N\kappa^{\delta-1} r^{1+\delta}[u]_{z',(1+\delta)/2,1+\delta}+N(\kappa r)^{1+\delta} [\vec{f}]_{z',\delta/2,\delta}.
\end{align*}
Therefore, we have
\begin{equation}
                                \label{eq2.28p2}
r^{-1-\delta}\,\inf_{p\in\hat\bP_1}|u-p|_{0;Q_r(z_0)}\le N\kappa^{\delta-1} [u]_{z',(1+\delta)/2,1+\delta}+N \kappa^{1+\delta} [\vec f]_{z',\delta/2,\delta},
\end{equation}
for any $z_0\in \bR^{d+1}$ and $r>0$. By first taking the supremum in \eqref{eq2.28p2} and then using the equivalence of parabolic H\"older semi-norms similar to \cite[Theorem 8.5.2]{Kr96}, we obtain
\[
[u]_{z',(1+\delta)/2,1+\delta}\le N\kappa^{\delta-1} [u]_{z',(1+\delta)/2,1+\delta}+N\kappa^{2+\delta} [\vec f]_{z',\delta/2,\delta}.
\]
The rest of proof is repetitive and omitted.
\end{proof}

\mysection{Equations with coefficients independent of $x$}	\label{sec:c}

As pointed out in the introduction, if the coefficients of the elliptic operator $L$ are constants, then we have somewhat better partial Schauder estimates, namely \eqref{C}.
More precisely, we consider elliptic operators
\[
L_0u:=a^{ij}D_{ij}u,
\]
where $a^{ij}$ are constants satisfying the condition \eqref{elliptic}.
Then we have
\begin{theorem}
                        \label{thmA1}
Assume that $f \in C^\delta_{x'}(\bR^d)$ and $u$ is a bounded $W^2_{2,loc}$ solution of the equation
\[L_0 u=f \quad \text{in }\,\bR^d.\]
Then $D_{x'} u \in C^{1+\delta}(\bR^d)$ and there is a constant $N=N(d,q,\delta,\nu)$ such that
\begin{equation}
                                            \label{eqA10.47}
[D_{x'} u]_{1+\delta}\le N[f]_{x',\delta}.
\end{equation}
\end{theorem}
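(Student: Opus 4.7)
The plan is to apply Theorem~\ref{thm3} to the first tangential derivatives of $u$ and then to upgrade the resulting partial H\"older estimate to a full one via the mollification scheme of Section~\ref{sec:e}. Since $a^{ij}$ are constants, for each $k\in\{1,\ldots,q\}$ the function $v:=D_k u$ satisfies
\[
L_0 v = D_k f = D_i(f\delta_{ik}) = \dv(f\vec{e}_k),
\]
which is of divergence form with vector field $\vec g:=f\vec{e}_k\in C^\delta_{x'}(\bR^d;\bR^d)$ satisfying $[\vec g]_{x',\delta}\le[f]_{x',\delta}$. Theorem~\ref{thm3} then yields the partial H\"older bound
\[
[D_k u]_{x',1+\delta}\le N[f]_{x',\delta}.
\]

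To upgrade this to the full H\"older bound $[D_k u]_{1+\delta}$, I would run the argument of Theorem~\ref{thm1} on $v=D_k u$ directly, but now approximating by full first-order polynomials rather than by polynomials in $x'$ only. Fix $x_0\in\bR^d$, $r>0$, and $\kappa>2$, and decompose $v=\tv^{\kappa r}+w+e$ in $B_{\kappa r}(x_0)$, where $w$ is $L_0$-harmonic with boundary data $v-\tv^{\kappa r}$ and $e$ vanishes on $\partial B_{\kappa r}$ and satisfies $L_0 e=\dv((f-\tf^{\kappa r})\vec{e}_k)$ in divergence form. The analysis of $e$ mirrors that in the proof of Theorem~\ref{thm3} (energy estimate, local boundedness, Poincar\'e's inequality, and Lemma~\ref{lem12.25}~ii)) and yields $|e|_{0;B_{\kappa r/2}}\le N(\kappa r)^{1+\delta}[f]_{x',\delta}$. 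Because $L_0$ has constant coefficients, every derivative of the $L_0$-harmonic function $w$ is again $L_0$-harmonic, and the interior estimate $|D^\ell w|_{0;B_{\kappa r/2}}\le N(\kappa r)^{-\ell}|w|_{0;B_{\kappa r}}$ combined with the maximum principle gives $|w-T^1_{x_0}w|_{0;B_r}\le N\kappa^{\delta-1}r^{1+\delta}[v]_{1+\delta}$, where $T^1_{x_0}$ denotes the first-order Taylor polynomial at $x_0$ in all variables. Taking $p=T^1_{x_0}w+T^1_{x_0}\tv^{\kappa r}$ and invoking \cite[Theorem~3.3.1]{Kr96} should produce an inequality of the form
\[
[v]_{1+\delta}\le N\kappa^{\delta-1}[v]_{1+\delta}+N\kappa^{1+\delta}[f]_{x',\delta},
\]
which closes upon choosing $\kappa$ sufficiently large.

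The chief obstacle is producing the favorable $\kappa^{\delta-1}$ factor in the Taylor remainder $|\tv^{\kappa r}-T^1_{x_0}\tv^{\kappa r}|_{0;B_r}$: a direct expansion gives only $Nr^{1+\delta}[v]_{1+\delta}$, because the partial mollification does not regularize $v$ in the $x''$ direction. To recover the factor I would decompose the Hessian of $\tv^{\kappa r}$ by coordinate direction. For any pair $(i,j)$ with at least one index in $\{1,\ldots,q\}$, the mixed derivative $D_{ij}\tv^{\kappa r}$ can be written as $D_i\widetilde{(D_j v)}^{\kappa r}$ with the differentiation falling on an $x'$-direction, and Lemma~\ref{lem12.25}~i) combined with the first-step bound $[v]_{x',1+\delta}\le N[f]_{x',\delta}$ yields $|D_{ij}\tv^{\kappa r}|_0\le N(\kappa r)^{\delta-1}[f]_{x',\delta}$. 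The remaining pure-$x''$ second derivatives $(i,j>q)$ are constrained by $L_0\tv^{\kappa r}=D_k\tf^{\kappa r}$ together with the bound $|D_k\tf^{\kappa r}|_0\le N(\kappa r)^{\delta-1}[f]_{x',\delta}$ from Lemma~\ref{lem12.25}~i), which, combined with an interior Schauder estimate for the constant-coefficient operator, controls their contribution to the remainder. It is precisely here that the divergence structure of the constant-coefficient operator, already exploited to invoke Theorem~\ref{thm3}, enters essentially.
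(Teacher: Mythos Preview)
Your two-step plan is reasonable in spirit, but the upgrade step has a real gap, and the paper's fix is much simpler than your proposed workaround: replace the \emph{partial} mollification $\tv^{\kappa r}$ by a mollification $\bar v^{\kappa r}$ in \emph{all} $d$ spatial variables. With the full-variable analogue of Lemma~\ref{lem12.25} one has directly
\[
|D_x^2\bar v^{\kappa r}|_0\le N(\kappa r)^{\delta-1}[v]_{1+\delta},
\qquad
|v-\bar v^{\kappa r}|_0\le N(\kappa r)^{1+\delta}[v]_{1+\delta},
\]
so both the Taylor remainder for $\bar v^{\kappa r}$ and the boundary data for $w$ automatically carry the $\kappa^{\delta-1}$ factor with the full seminorm $[v]_{1+\delta}$ on the right; your ``chief obstacle'' simply disappears. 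The divergence structure is used only in the error term, via $D_k f=D_k\bigl(f(x)-f(x^1,\ldots,x_0^k,\ldots,x^d)\bigr)$ (valid because $k\le q$), so that $L_0 e$ is the divergence of something of size $(\kappa r)^\delta[f]_{x',\delta}$ and the argument leading to \eqref{eq3.14} yields $|e|_{0;B_{\kappa r/2}}\le N(\kappa r)^{1+\delta}[f]_{x',\delta}$. The preliminary appeal to Theorem~\ref{thm3} then becomes unnecessary. (The paper actually carries this out for the parabolic analogue, Theorem~\ref{thmA2}, and lets Theorem~\ref{thmA1} follow.)

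Your Hessian decomposition of $\tv^{\kappa r}$ does not close as written. For mixed indices $i\le q$, $j>q$, the identity $D_{ij}\tv^{\kappa r}=D_i\widetilde{(D_j v)}^{\kappa r}$ together with Lemma~\ref{lem12.25}~i) requires $[D_j v]_{x',\delta}=[D_{jk}u]_{x',\delta}$ with $j>q$, which your first-step bound $[v]_{x',1+\delta}=[D_{x'}D_k u]_{x',\delta}$ does not supply; this particular hole could be patched by applying Theorem~\ref{thm3} to $D_j u$ for every $j$. The pure-$x''$ block, however, cannot be handled this way: the equation $L_0\tv^{\kappa r}=D_k\tf^{\kappa r}$ fixes only a single linear combination of the derivatives $D_{ij}\tv^{\kappa r}$ with $i,j>q$, and invoking a local interior estimate at scale $\kappa r$ drags in lower-order norms of $\tv^{\kappa r}$ without the required $(\kappa r)^{\delta-1}$ scaling. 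Since partial mollification does nothing to regularize $v$ in $x''$, the third-term Taylor remainder in the $x''$ directions stays of order $r^{1+\delta}[v]_{1+\delta}$ with no $\kappa$-decay, and the absorption argument cannot close. Full mollification sidesteps all of this at once.
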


\begin{remark}
In the case when $q=d-1$, Theorem \ref{thmA1} implies an interesting result that the full Hessian $D^2 u\in C_{x'}^\delta(\bR^d)$ and
\[
[D^2 u]_{x',\delta}\le N[f]_{x',\delta}.
\]
\end{remark}

We give an example showing that Theorem \ref{thmA1} and thus Theorem \ref{thmA2} below are optimal in the sense that one cannot expect $D_{x''}^2 u\in C_{x'}^\delta$ if $q<d-1$.

\begin{example}
Recall the following well-known example in $\bR^2$:
\[u(x,y)=xy\{-\ln (x^2+y^2)\}^{1/2}\zeta(x,y),\]
where $\zeta$ is a smooth cut-off function in $\bR^2$ compactly supported on $B_1$ and equals to $1$ on $\bar B_{1/2}$. A direct calculation shows that $u_{xx}, u_{yy} \in C^0(\bR^2)$ but $u_{xy} \notin L^\infty(\bR^2)$.
If we set $v(x,y,z)= u(x,y)\sin(z)$, then we have
\[
\Delta v(x,y,z)=(u_{xx}+u_{yy}-u)(x,y)\sin(z),\quad v_{xy}(x,y,z)=u_{xy}(x,y)\sin(z),
\]
and thus $[\Delta v]_{z,\delta}<\infty$ but $[v_{xy}]_{z,\delta}=\infty$.
\end{example}

\begin{remark}
By using well-known properties of the fundamental solutions of elliptic equations with constant coefficients  and proceeding similarly as in the proof of Theorem~\ref{thm1}, we can extend Theorem \ref{thmA1} to higher order elliptic operators with constant coefficients (cf. \cite[Theorem 3.6.1]{Kr96}).
This would give an alternative proof of \cite[Theorem 3.1]{Fife}.
\end{remark}

Instead of proving Theorem~\ref{thmA1} directly, we will prove a parabolic version of it, which is new to the best of our knowledge.
We consider parabolic operators
\[
P_0u:=u_t-a^{ij}(t)D_{ij}u,
\]
where $a^{ij}(t)$ are functions depending only on $t$ in a measurable way and satisfying the condition \eqref{parabolic}.
In contrast to elliptic equations with constant coefficients, the potential theory is not applicable to this case.

\begin{theorem}
                                            \label{thmA2}
Assume that $f \in C^{\delta}_{x'}(\bR^{d+1})$ and $u$ is a bounded $W^{1,2}_{2,loc}$ solution of the equation
\[P_0 u=f \quad \text{in}\,\,\bR^{d+1}.\]
Then $D_{x'} u\in C^{(1+\delta)/2,1+\delta}(\bR^{d+1})$ and there is a constant $N=N(d,q,\delta,\nu)$ such that
\begin{equation}					\label{eq5.3pA}
[D_{x'} u]_{(1+\delta)/2,1+\delta}\le N[f]_{x',\delta}.
\end{equation}
\end{theorem}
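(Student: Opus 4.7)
The key idea is to exploit the divergence structure of $P_0$: since $a^{ij}$ depends only on $t$, we have $P_0 u = u_t - D_i(a^{ij}(t) D_j u) = f$, and for each $k\in\{1,\ldots,q\}$, $v := D_k u$ is a weak solution of
\[
\cP v := v_t - D_i(a^{ij}(t) D_j v) = \dv(f e_k),
\]
where $e_k$ is the $k$-th standard basis vector. The plan is to prove $[v]_{(1+\delta)/2,1+\delta}\le N[f]_{x',\delta}$ and then maximize over $k\le q$. The estimate is derived a priori under the assumption $v\in C^{(1+\delta)/2,1+\delta}(\bR^{d+1})$; this will be removed at the end by mollifying $u$ in all $x$-variables (which commutes with $P_0$ because $a^{ij}$ is independent of $x$), noting that the mollification $\bar u^\epsilon$ is smooth in $x$ and has $\partial_t\bar u^\epsilon$ bounded by the equation, so the a priori bound applies with constants uniform in $\epsilon$, and we may pass to the limit.

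The a priori step uses the Campanato-type equivalent semi-norm (analogous to \cite[Theorem~8.5.2]{Kr96})
\[
[v]_{(1+\delta)/2,1+\delta} \sim \sup_{z_0\in\bR^{d+1},\,r>0} r^{-(1+\delta)} \inf_{P\in\bP_1} |v - P|_{0;Q_r(z_0)},
\]
where $\bP_1$ consists of polynomials affine in $x$ and constant in $t$. Fix $z_0$, $r>0$, and $\kappa>2$ to be chosen, and take $p(x):=v(z_0)+\nabla_x v(z_0)\cdot(x-x_0)$. Parabolic Taylor expansion yields $|V|_{0;Q_{\kappa r}(z_0)}\le N(\kappa r)^{1+\delta}[v]_{(1+\delta)/2,1+\delta}$ for $V:=v-p$, and $\cP V=\dv(f e_k)$ since $\cP p\equiv 0$ (again because $a^{ij}$ is independent of $x$). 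The essential observation is that setting $\bar f(t,x''):=f(t,x_0',x'')$ gives $\dv(f e_k)=\dv((f-\bar f)e_k)$, as $D_k\bar f=0$ for $k\le q$, while $|f-\bar f|_{0;Q_{\kappa r}}\le N(\kappa r)^\delta[f]_{x',\delta}$. Split $V=V_1+V_2$ in $Q_{\kappa r}(z_0)$ with $\cP V_1=0$ and $V_1=V$ on $\partial_p Q_{\kappa r}$, and $V_2$ solving $\cP V_2=\dv((f-\bar f)e_k)$ with zero parabolic boundary values. The maximum principle bounds $|V_1|_0\le N(\kappa r)^{1+\delta}[v]_{(1+\delta)/2,1+\delta}$, and the energy estimate, Poincar\'e inequality, and divergence-form local boundedness (mirroring the proof of Theorem~\ref{thm6}) give $|V_2|_{0;Q_{\kappa r/2}}\le N(\kappa r)^{1+\delta}[f]_{x',\delta}$.

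The decisive gain over Theorem~\ref{thm6} is the interior estimate for $V_1$: since $a^{ij}$ is independent of \emph{all} of $x$ (not just $x''$), every spatial derivative $D^\alpha_x V_1$ satisfies the same divergence-form equation, and iterated De Giorgi-Moser-Nash H\"older estimates together with Caccioppoli's inequality and the identity $\partial_t V_1=a^{ij}(t)D_{ij}V_1$ yield $[D^\alpha_x D^m_t V_1]_{0;Q_{\kappa r/2}}\le N(\kappa r)^{-|\alpha|-2m}|V_1|_{0;Q_{\kappa r}}$ for all $|\alpha|,m\ge 0$. Approximating $V_1$ in $Q_r$ by $q(x):=V_1(z_0)+\nabla_x V_1(z_0)\cdot(x-x_0)$ via parabolic Taylor (controlling the remainder by $r^2|D^2_x V_1|_0$, $|t-t_0||\partial_t V_1|_0$, and the cross term $r|t-t_0||\partial_t\nabla_x V_1|_0$) produces $|V_1-q|_{0;Q_r}\le N\kappa^{\delta-1}r^{1+\delta}[v]_{(1+\delta)/2,1+\delta}$. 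Combining with the $V_2$ estimate gives
\[
r^{-(1+\delta)}\inf_{P\in\bP_1}|v-P|_{0;Q_r(z_0)} \le N\kappa^{\delta-1}[v]_{(1+\delta)/2,1+\delta} + N\kappa^{1+\delta}[f]_{x',\delta},
\]
and taking the supremum in $z_0,r$ and applying the equivalent norm yields $[v]_{(1+\delta)/2,1+\delta}\le N\kappa^{\delta-1}[v]_{(1+\delta)/2,1+\delta}+N\kappa^{1+\delta}[f]_{x',\delta}$; a sufficiently large $\kappa$ allows absorption and finishes the a priori estimate. The main obstacle I anticipate is establishing the all-orders interior regularity for $V_1$ when $a^{ij}$ is only measurable in $t$: although the equation is invariant under $x$-translations (so finite-difference-quotient arguments in the spirit of \cite[\S 5.3]{CaCa95} should apply), carefully bootstrapping all spatial and temporal derivatives for divergence-form parabolic equations and combining them with the Moser/local-boundedness estimates requires attention.
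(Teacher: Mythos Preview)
Your proposal is correct and takes a genuinely different route from the paper. The paper uses a \emph{two-step} approach: first it proves the purely spatial estimate $[D_j u]_{x,1+\delta}\le N[f]_{x',\delta}$ (Campanato characterization via \cite[Theorem~3.3.1]{Kr96}, approximating by $t$-\emph{dependent} affine-in-$x$ functions $\bar T^1_{x_0}w(t,x)=w(t,x_0)+\sum_i D_iw(t,x_0)(x^i-x_0^i)$), and only afterwards recovers the time-H\"older pieces $\langle D_{x'}u\rangle_{1+\delta}$ and $[D_{xx'}u]_{\delta/2,\delta}$ by a separate mollification argument. Moreover, the paper's decomposition is $D_j u - D_j\bar u^{\kappa r} = w + v$ (mollification as the ``smooth part''), so that the inhomogeneous piece sees $D_j(f-\bar f^{\kappa r})$ and the freezing trick is applied only in the single coordinate $x^j$. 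You instead subtract the $t$-constant Taylor polynomial $p$, freeze $f$ in all of $x'$ at once (which is cleaner and avoids the mollified $\bar f^{\kappa r}$), and run the Campanato argument directly with the full parabolic seminorm $[v]_{(1+\delta)/2,1+\delta}$ and the class $\bP_1$ of polynomials affine in $x$ and constant in $t$. This is more streamlined: the time regularity comes for free from the interior bound $|\partial_t V_1|\le N|D_x^2 V_1|$ (via the equation $\partial_t V_1=a^{ij}(t)D_{ij}V_1$), so your Taylor remainder $|V_1-q|_{0;Q_r}\lesssim r^2(|D_x^2 V_1|_0+|\partial_t V_1|_0)$ needs only second spatial derivatives, making your flagged ``obstacle'' a non-issue and the cross term you mention unnecessary. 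The paper's approach is more modular (and perhaps closer in spirit to the other proofs in the article), while yours reaches the conclusion in a single absorption step.
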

\begin{proof}
We use the same strategy as in the earlier proofs, but with $u$ replaced by $D_j u$, where $j=1,\ldots,q$.
We may certainly assume that $u$ is infinitely differentiable in $x$ with bounded derivatives.
Let $\kappa>2$ be a number to be chosen later.
In this proof, we denote
\[
\bar u^\epsilon(t,x):=\frac{1}{\epsilon^d}\int_{\bR^d}u(t,y) \zeta\left(\frac{x-y}{\epsilon}\right)\,dy=
\int_{\bR^d}u(t,x-\epsilon y)\zeta(y)\,dy,
\]
where $\zeta(x)=\zeta(x^1,\ldots,x^d)=\prod_{i=1}^d \eta(x^i)$ and $\eta=\eta(t)$ is given as in Sect.~\ref{sec:e}.
Notice that we have for any $r>0$,
\[
P_0 \bar u^{\kappa r}=\bar f^{\kappa r}.
\]
For $z_0=(t_0,x_0)\in\bR^{d+1}$, let us write $Q_\rho=Q_\rho(z_0)$.
We regard $P_0$ as a divergence form operator, and for $j=1,\ldots,q$, let $w$ be a generalized solution from $V_2(Q_{\kappa r})$ of the problem
\begin{equation}							 \label{eq1.08pA}
\left\{
  \begin{aligned}
    P_0 w = 0 \quad & \hbox{in $Q_{\kappa r}$;} \\
    w=D_j u-D_j \bar u^{\kappa r} \quad & \hbox{on $\partial_p Q_{\kappa r}$.}
  \end{aligned}
\right.
\end{equation}
By the maximum principle and Lemma \ref{lem12.25} ii), we obtain
\begin{equation}
                                                        \label{eq1.27pA}
\sup_{Q_{\kappa r}}|w|= \sup_{\partial_p Q_{\kappa r}}|w|\le N(\kappa r)^{1+\delta}[D_j u]_{x,1+\delta},
\end{equation}
where $[D_j u]_{x,1+\delta}$ is defined similar to \eqref{eq:fhs} .

Analogous to \eqref{eq2.76a}, we have for any integer $\ell\ge 0$,
\begin{equation}
                                                            \label{eq2.76aA}
[D_x^\ell w]_{0;Q_{\kappa r/2}}\le (\kappa r)^{-\ell} N(\ell,d,\nu) |w|_{0;Q_{\kappa r}},
\end{equation}
We define the first-order Taylor's polynomial of $w$ with respect to $x$ at $x_0$ as
\[
\bar T^1_{x_0}w(t,x):=w(t,x_0)+\sum_{i=1}^d  D_i w(t,x_0) (x^i-x_0^i).
\]
Notice that by Taylor's formula,
\begin{equation}
                                                    \label{eq2.60qA}
\abs{w-\bar T^1_{x_0}w}_{0;Q_r} \leq N r^2 [D_x^2w]_{0;Q_r},
\end{equation}
This together with \eqref{eq2.76aA} and \eqref{eq1.27pA} yields
\begin{equation}
                                                    \label{Eq1.48pA}
\abs{w-\bar T^1_{x_0}w}_{0;Q_r} \le N\kappa^{\delta-1} r^{1+\delta}[D_j u]_{x,1+\delta}.
\end{equation}

On the other hand, $v:=D_j u-D_j \bar u^{\kappa r}-w$ satisfies
\[
\left\{
  \begin{aligned}
    P_0v =D_j(f-\bar f^{\kappa r}) \quad & \hbox{in $Q_{\kappa r}$;} \\
    v=0 \quad & \hbox{on $\partial_p Q_{\kappa r}$.}
  \end{aligned}
\right.
\]
Observe that
\begin{align*}
D_jf&=D_j\left(f(t,x)-f(t,x^1,\ldots,x^{j-1},x_0^j,x^{j+1},\ldots,x^d)\right),\\
D_j \bar f^{\kappa r}&=D_j\left(\bar f^{\kappa r}(t,x)-\bar f^{\kappa r}(t,x^1,\ldots,x^{j-1},x_0^j,x^{j+1},\ldots,x^d)\right).
\end{align*}
Then, by a similar argument that lead to \eqref{eq3.14p}, we obtain
\begin{equation}
                                                \label{eq2.14pA}
|v|_{0;Q_{\kappa r/2}}\leq N(\kappa r)^{1+\delta}\left([f]_{x',\delta}+[\bar f^{\kappa r}]_{x',\delta}\right) \leq
 N(\kappa r)^{1+\delta} [f]_{x',\delta}.
\end{equation}
Then by the estimate \eqref{eq2.60qA} applied to $D_j u^{\kappa r}$ and Lemma \ref{lem2.25} i), we get
\begin{equation}
                                                \label{eq2.17pA}
\abs{D_j \bar u^{\kappa r}-\bar T^1_{x_0}D_j \bar u^{\kappa r}}_{0;Q_r}\le N \kappa^{\delta-1}r^{1+\delta} [D_j u]_{x,1+\delta}.
\end{equation}
We denote by $\bar \bP_1$ the set of all functions $p$ on $\bR^{d+1}$ of the form
\[
p(z)=p(t,x)=\sum_{i=1}^d \alpha^i(t) x^i + \beta(t),
\]
By taking  $p=\bar T^1_{x_0}w+\bar T^1_{x_0} D_j u^{\kappa r}\in \bP_1$ and combining \eqref{eq2.14pA}, \eqref{eq2.17pA}, and \eqref{Eq1.48pA}, we obtain
\begin{align*}
\abs{D_j u-p}_{0;Q_r} &\leq \abs{D_j u-D_j u^{\kappa r}-w}_{0;Q_r}+\abs{D_j u^{\kappa r}-T^1_{x_0} D_j u^{\kappa r}}_{0;Q_r}+\abs{w-T^1_{x_0}w}_{0;Q_r}\\
&\le N\kappa^{\delta-1} r^{1+\delta}[D_j u]_{x,1+\delta}+N(\kappa r)^{1+\delta} [f]_{x',\delta}.
\end{align*}
Therefore, for $j=1,\ldots,q$, we have
\begin{equation}
                                \label{eq2.28pA}
r^{-1-\delta}\,\inf_{p\in \bar \bP_1}\abs{D_j u-p}_{0;Q_r(z_0)}\le N\kappa^{\delta-1} [D_j u]_{x,1+\delta}+N \kappa^{1+\delta} [f]_{x',\delta/2,\delta},
\end{equation}
for any $z_0\in \bR^{d+1}$ and $r>0$. By taking the supremum over $z_0\in \bR^{d+1}$ and $r>0$ in \eqref{eq2.28pA} and then applying \cite[Theorem 3.3.1]{Kr96}, we get
\begin{equation*}
[D_j u]_{x,1+\delta}\le N\kappa^{\delta-1} [D_j u]_{x,1+\delta}+N\kappa^{1+\delta} [f]_{x',\delta},
\end{equation*}
which implies by taking $\kappa$ sufficiently large as before
\begin{equation}
                                \label{eq2.33pA}
[D_{x'} u]_{x,1+\delta}\le N [f]_{x',\delta}.
\end{equation}
To derive H\"older continuity in $t$-variable, we again use the mollification method.
For any $z=(t,x)\in \bR^{d+1}$ and  $r>0$, by the triangle inequality,
\begin{multline*}
\abs{D_{x'}u(t+r^2,x)-D_{x'}u(t,x)} \leq \abs{D_{x'}u(t+r^2,x)-D_{x'} \bar u^r(t+r^2,x)}\\
+\abs{D_{x'} u(t,x)-D_{x'} \bar u^r(t,x)}+\abs{D_{x'}\bar u^r(t+r^2,x)-D_{x'}\bar u^r(t,x)}.
\end{multline*}
The first two terms on the right-hand side is bounded by $N r^{1+\delta}[f]_{x',\delta}$ due to Lemma \ref{lem12.25} ii) and \eqref{eq2.33pA}. To bound the last term, we write
\begin{align*}
D_{x'}\bar u^r(t+r^2,x)&-D_{x'}\bar u^r(t,x)=\int_0^{r^2}D_{x'}D_t \bar u^r(t+s,x)\,ds\\
&\qquad=\int_0^{r^2}D_{x'}\left(a^{ij}D_{ij} \bar u^r(t+s,x)+\bar f^r(t+s,x)\right)\,ds\\
&\qquad=\int_0^{r^2}\left(a^{ij}D_{ij} D_{x'} \bar u^r(t+s,x)+D_{x'}\bar f^r(t+s,x)\right)\,ds.
\end{align*}
By Lemma \ref{lem12.25} i) and \eqref{eq2.33pA}, we have
\begin{align*}
\abs{D_x^2 D_{x'} \bar u^r}_0 &\leq Nr^{\delta-1}[D_{x'} u]_{x,1+\delta}\le N r^{\delta-1}[f]_{x',\delta},\\
\abs{D_{x'} \bar f^r}_0 &\leq N r^{\delta-1}[f]_{x',\delta}.
\end{align*}
Combining the estimates above yields
\[
\abs{D_{x'}u(t+r^2,x)-D_{x'}u(t,x)} \le Nr^{\delta+1}[f]_{x',\delta},
\]
which implies
\begin{equation}				\label{eq5.16pA}
\ip{D_{x'}u}_{1+\delta}\le N[f]_{x',\delta}.
\end{equation}
Similarly, we have
\begin{multline*}
\abs{D_{xx'}u(t+r^2,x)-D_{xx'}u(t,x)} \le \abs{D_{xx'}u(t+r^2,x)-D_{xx'} \bar u^r(t+r^2,x)}\\
+\abs{D_{xx'}u(t,x)-D_{xx'} \bar u^r(t,x)}+\abs{D_{xx'} \bar u^r(t+r^2,x)-D_{xx'} \bar u^r(t,x)}.
\end{multline*}
Similar to above, the first two terms on the right-hand side is bounded by $N r^{\delta}[f]_{x',\delta}$ due to Lemma \ref{lem12.25} ii) and \eqref{eq2.33pA}.
To bound the last term, we write
\begin{align*}
D_{xx'}\bar u^r(t+r^2,x)&-D_{xx'} \bar u^r(t,x) =\int_0^{r^2}D_{xx'}D_t \bar u^r(t+s,x)\,ds\\
&\qquad=\int_0^{r^2}D_{xx'}\left(a^{ij}D_{ij} \bar u^r(t+s,x)+\bar f^r(t+s,x)\right)\,ds\\
&\qquad =\int_0^{r^2}\left(a^{ij}D_{ij} D_{xx'} \bar u^r(t+s,x)+ D_{xx'} \bar f^r(t+s,x)\right)\,ds.
\end{align*}
By Lemma \ref{lem12.25} i) and \eqref{eq2.33pA}, we have
\begin{align*}
\abs{D^2 D_{xx'} \bar u^r}_0 &\le Nr^{\delta-2}[D_{xx'} u]_{x,\delta}\le N r^{\delta-2}[f]_{x',\delta},\\
\abs{D_{xx'} \bar f^r}_0 
&\le N r^{\delta-2}[f]_{x',\delta}.
\end{align*}
Combining the estimates above yields
\[
\abs{D_{xx'}u(t+r^2,x)-D_{xx'}u(t,x)} \le Nr^{\delta}[f]_{x',\delta},
\]
which together with \eqref{eq2.33pA} implies
\begin{equation}				\label{eq5.17pA}
[D_{xx'}u]_{\delta/2,\delta}\le N[f]_{x',\delta}.
\end{equation}
By combining \eqref{eq5.16pA} and \eqref{eq5.17pA}, we obtain the desired estimate \eqref{eq5.3pA}.
The proof is complete.
\end{proof}

\begin{remark}
By the same reasoning as in Remark \ref{rem23.52}, Theorem \ref{thmA1} and \ref{thmA2} can be extended to operators with coefficients that are H\"older continuous with respect to $x'$.
\end{remark}

\begin{acknowledgment}
The authors are grateful to Xu-Jia Wang for bringing this problem to their attention, and Nicolai V. Krylov for helpful comments.
The authors also thank the referee for useful comments and suggestions.
Hongjie Dong was partially supported by the National Science Foundation under agreement No.~DMS-0800129.
Seick Kim was supported by Basic Science Research Program through the National Research Foundation of Korea(NRF) grant funded by the Korea government (MEST, No.~R01-2008-000-20010-0) and also by WCU(World Class University) program through the NRF funded by MEST (No.~R31-2008-000-10049-0).
\end{acknowledgment}


\end{document}